\newtheorem{theorem}{Theorem}[section]
\newtheorem{lemma}[theorem]{Lemma}
\newtheorem{example}[theorem]{Example}
\newtheorem{remark}[theorem]{Remark}
\newtheorem{proposition}[theorem]{Proposition}
\newcommand{\minusre}{\hspace{0.3em}\raisebox{0.3ex}{\sl \tiny /}\hspace{0.3em}}
\newcommand{\minusli}{\hspace{0.3em}\raisebox{0.3ex}{\sl \tiny $\setminus $}\hspace{0.3em}}
\newcommand{\lex}{\,\overrightarrow{\times}\,}
\newcommand{\Ker}{\mbox{\rm Ker}}
\newcommand{\Infinit}{\mbox{\rm Infinit}}
\newcommand{\RDP}{\mbox{\rm RDP}}
\newcommand{\RIP}{\mbox{\rm RIP}}
\begin{document}
\title[On a New Construction of Pseudo Effect Algebras]{On a New Construction of Pseudo Effect Algebras}
\author[Anatolij Dvure\v{c}enskij]{Anatolij Dvure\v censkij$^{1,2}$}
\date{}%
\maketitle
\begin{center}  \footnote{Keywords: Pseudo effect algebra, generalized pseudo effect algebra, po-group, strong unit, Riesz Decomposition Property, lexicographic product, kite pseudo effect algebra, normal ideal, subdirect irreducibility.

 AMS classification: 03G12, 81P10, 81P15, 06C15

This work was supported by  the Slovak Research and Development Agency under contract APVV-0178-11,  grant VEGA No. 2/0059/12 SAV, and
CZ.1.07/2.3.00/20.0051.
 }
Mathematical Institute,  Slovak Academy of Sciences,\\
\v Stef\'anikova 49, SK-814 73 Bratislava, Slovakia\\
$^2$ Depart. Algebra  Geom.,  Palack\'{y} University\\
17. listopadu 12, CZ-771 46 Olomouc, Czech Republic\\

E-mail: {\tt dvurecen@mat.savba.sk}
\end{center}

\begin{abstract}
We define a new class of pseudo effect algebras, called kite pseudo effect algebras, which is connected not necessarily with partially ordered groups, but rather with generalized pseudo effect algebras where the greatest element is not guaranteed. Starting even with a  commutative generalized pseudo effect algebra, we can obtain a non-commutative pseudo effect algebra. We show how such kite pseudo effect algebras are tied with different types of the Riesz Decomposition Properties. We  find conditions when kite pseudo effect algebras have the least non-trivial normal ideal.
\end{abstract}

\section{Introduction}

A basic algebraic structure describing events observing during the measuring process in quantum mechanics is an effect algebra introduced in \cite{FoBe}. Such algebras are partial algebras with a primary notion $+$ which means that $a+b$ denotes the disjunction of mutually excluding events $a$ and $b$. This class was inspired by an algebraic counterpart of so-called POV-measures (positive operator-valued measures). The orthodox example of effect algebras is the class $\mathcal E(H)$ of Hermitian operators between the zero, $O$, and identity operator, $I$, acting on a Hilbert space $H$. Effect algebras are intensively studied during the last 20 years because they generalize Boolean algebras, orthomodular lattices and posets, and orthoalgebras.

In many important examples, an effect algebra is an interval $[0,u]$ in the positive cone of an Abelian partially ordered group (=po-group). This is true e.g. if (1) $\mathcal B(H)$ is the system of Hermitian operators of a Hilbert space $H$, then $\mathcal E(H)$ is the interval $[O,I]$ in $\mathcal B(H)$, or (2) if the effect algebra satisfies the Riesz Decomposition Property (RDP for short), c.f. \cite{Rav}.

A more general structure with a partially defined operation $+$ is a generalized effect algebra where the existence of the greatest element $1$ is not a priori guaranteed. Such an example is the set of all positive Hermitian operators $\mathcal B(H)^+$ of a Hilbert space $H$. Every generalized effect $E$ algebra can be embedded into the unitization of $E$, i.e. into the effect algebra $E\uplus \overline E,$ where $E$ is a lower part and $\overline E$ is a copy of $E$ with the reverse order in the upper part, and $\uplus$ denotes the ordinal sum.

In the Nineties, the assumption that addition $+$ is commutative was canceled in \cite{DvVe1, DvVe2}, and pseudo effect algebras were introduced as a non-commutative generalization of effect algebras. They have been published in physical journals. Some physical motivation for pseudo effect algebras with possible physical situations in quantum mechanics were presented in \cite{DvVe5}. In some important examples they are also intervals in po-groups  that are not necessarily commutative. Also in physics, there are important structures with non-commutative operations as for example, multiplication of matrices is a non-commutative operation and matrices are frequently used in mathematical physics. In particular, the class of quadratic matrices of the form $$ A(a,b)=
\left( \begin{array}{cc}
a & b \\
0& 1
\end{array}
\right)
$$
for $a>0$, $b \in (-\infty,\infty)$ with usual multiplication of matrices is a non-commutative linearly ordered group with the neutral element $A(1,0)$ and with the positive cone consisting of matrices $A(a,b)$ with $a>1$ or $a=1$ and $b\ge0$.

Similarly, generalized pseudo effect algebras were introduced in \cite{DvVe3, DvVe4} as partial algebras with partial addition $+$ where the top element $1$ can fail. If a pseudo effect algebra satisfies a stronger type of the Riesz Decomposition Property, RDP$_1$, then it is again an interval in a po-group (not necessarily Abelian) with strong unit, \cite{DvVe1, DvVe2}. If a generalized pseudo effect algebra  satisfies also RDP$_1,$ by \cite{DvVe4}, it can be embedded into the positive cone of a po-group with RDP$_1$. We note that RDP$_1$ for effect algebras coincides with RDP, but for pseudo effect algebras they can be different.

Recently in \cite{DvuK}, there was introduced a new construction of pseudo effect algebras starting from a partially ordered group $G$ with the positive cone and negative cone $G^+$ and $G^-$, respectively, using an index set $I$ and two bijective mapping $\lambda,\rho: I \to I.$ The universes  of these algebras are of the form $(G^+)^I$ down and $(G^-)^I$ up and the addition between two sequences $\langle x_i\colon i \in I\rangle$ and $\langle y_j\colon j \in I\rangle$ is ruled by properties of $\lambda$ and $\rho$. Therefore, even starting with a commutative po-group, e.g. with the group $\mathbb Z$ of integers, the resulting algebra is not necessarily commutative. The basic properties of such pseudo effect algebras, called kite pseudo effect algebras, are presented in \cite{DvuK, DvHo}.

In the present paper we generalize the construction of pseudo effect algebras when we change $G^+$ to a generalized pseudo effect algebra 
because $G^+$ is an example of generalized pseudo effect algebras. It is interesting that the original construction works also for this case with necessary specifications, therefore, some present proofs resemble original ones from \cite{DvuK,DvHo}. If $I$ is a singleton, then our construction corresponds to the so-called unitization studied e.g. in \cite{XLGRD}, when $E$ is used as a lower part and $\overline E$ (a copy of $E$ with the reverse order as original one in $E$) is an upper part of the pseudo effect algebra $E \uplus \overline E$.

The main aim of the paper is to introduce this new construction of pseudo effect algebras because every theory is as so good as it contains the largest reservoir of important examples. Similarly as in \cite{DvuK, DvHo}, we present the basic construction with the fundamental properties of kite pseudo effect algebras. We concentrate to a relation between the Riesz Decomposition Properties of  the original generalized pseudo effect algebra and of the resulting kite pseudo effect algebra. Finally, we show also cases when the kite pseudo effect algebra with RDP$_1$ is subdirectly irreducible (equivalently, it contains the smallest non-trivial normal ideal).

The paper is organized as follows. In Section 1 we gather  basic definitions and notions from the theory of pseudo effect algebras and generalized pseudo effect algebras, theory of partially ordered groups. Section 3 introduces the construction of kite pseudo effect algebras starting with a generalized pseudo effect algebra. In addition, it studies also different types of the Riesz Decomposition Property. Section 4 concentrates to a description of subdirect irreducible kite pseudo effect algebras using the subdirect irreducibility of the original generalized pseudo effect algebra.

\section{Basic Definitions and Properties}

In the present section, we gather the necessary notions from theory of pseudo effect algebra and partially ordered groups which we will use in the paper.

According to \cite{DvVe1, DvVe2}, we say that a {\it pseudo effect algebra} is  a partial algebra  $ E=(E; +, 0, 1)$, where $+$ is a partial binary operation and $0$ and $1$ are constants, such that for all $a, b, c
\in E$, the following holds

\begin{enumerate}
\item[(i)] $a+b$ and $(a+b)+c$ exist if and only if $b+c$ and
$a+(b+c)$ exist, and in this case $(a+b)+c = a+(b+c)$;

\item[(ii)]
  there is exactly one $d \in E$ and
exactly one $e \in E$ such that $a+d = e+a = 1$;

\item[(iii)]
 if $a+b$ exists, there are elements $d, e
\in E$ such that $a+b = d+a = b+e$;

\item[(iv)] if $1+a$ or $a+1$ exists, then $a = 0$.
\end{enumerate}

If we define $a \le b$ if and only if there exists an element $c\in
E$ such that $a+c =b,$ then $\le$ is a partial ordering on $E$ such
that $0 \le a \le 1$ for all $a \in E.$ It is possible to show that
$a \le b$ if and only if $b = a+c = d+a$ for some $c,d \in E$. We
write $c = a \minusre b$ and $d = b \minusli a.$ Then

$$ (b \minusli a) + a = b= a + (a \minusre b),
$$
and we write $a^- = 1 \minusli a$ and $a^\sim = a\minusre 1$ for all
$a \in E.$ Then $a^-+a=1=a+a^\sim$ and $a^{-\sim}=a=a^{\sim-}$ for all $a\in E.$

We recall that a {\it po-group} (= partially ordered group) is a
group $G=(G;+,-,0)$ endowed with a partial order $\le$ such that if $a\le b,$ $a,b
\in G,$ then $x+a+y \le x+b+y$ for all $x,y \in G.$  We denote by
$G^+:=\{g \in G: g \ge 0\}$ the {\it positive cone} of $G.$ If, in addition, $G$ is a lattice under $\le$, we call it an $\ell$-group (= lattice
ordered group). An element $u \in G^+$ is said to be a {\it strong unit} (or an {\it order unit}) if, given $g \in G,$ there is an integer $n \ge 1$ such that $g \le nu.$ The pair $(G,u),$ where $u$ is a fixed strong unit of $G,$ is said to be a {\it unital po-group}. We recall that  the {\it lexicographic product} of two po-groups $G_1$ and $G_2$ is the group $G_1\times G_2,$ where the group operations are defined by coordinates, and the ordering $\le $ on $G_1 \times G_2$ is defined as follows: For $(g_1,h_1),(g_2,h_2) \in G_1 \times G_2,$  we have $(g_1,h_1)\le (g_2,h_2)$  whenever $g_1 <g_2$ or $g_1=g_2$ and $h_1\le h_2.$ For more information on po-group, see \cite{Dar,Fuc}.

We denote by  $\mathbb Z$ the commutative $\ell$-group of integers.

Let  $G$ be a po-group and fix an element $u \in G^+.$ If we set $\Gamma(G,u):=[0,u]=\{g \in G: 0 \le g \le u\},$ then $\Gamma(G,u)=(\Gamma(G,u); +,0,u)$ is a pseudo effect algebra, where $+$ is the restriction of the group addition $+$ to $[0,u],$ i.e. $a+b$ is defined in $\Gamma(G,u)$ for $a,b \in \Gamma(G,u)$ iff $a+b \in \Gamma(G,u).$ Then $a^-=u-a$ and $a^\sim=-a+u$ for all $a \in \Gamma(G,u).$ A pseudo effect algebra which is isomorphic to some $\Gamma(G,u)$ for some po-group $G$ with $u>0$ is said to be an {\it interval pseudo effect algebra}.

If $+$ is commutative, i.e. $a+b$ is defined in $E$ iff  $b+a$ is defined in $E$ and $a+b=b+a,$ then $E$ is an {\it effect algebra} in the sense of \cite{FoBe}. For more information on effect algebras, we recommend \cite{DvPu}.

A pseudo effect algebra $E$ is said to be {\it symmetric} if $a^-=a^\sim $ for all $a \in E.$ We note that if $E$ is symmetric, then $E$ is not automatically an effect algebra. Indeed, if $G$ is a po-group that is not Abelian, then for the lexicographic product $\mathbb Z \lex G$ of the po-group $\mathbb Z$ with $G$ we have
$E=\Gamma(\mathbb Z\lex G, (1,0))$ is a symmetric pseudo effect algebra that is not an effect algebra.

A more general structure than pseudo effect algebras is the class of generalized pseudo effect algebras introduced in \cite{DvVe3, DvVe4}. A structure  $(E;+,0),$  where $+$ is a partial binary
operation and 0 is a  constant, is called a {\it generalized pseudo-effect algebra} (or a GPEA for short) if, for all $a,b,c \in E,$ the following hold:
\begin{enumerate}
\item[{\rm (GP1)}] $ a+ b$ and $(a+  b)+ c $ exist if and only if
$ b+ c$ and $a+ ( b+ c) $ exist, and in this case,
$(a+  b)+ c =a+ ( b+ c)$;

\item[{\rm (GP2)}] if $ a+ b$ exists, there are elements $d,e\in E$ such
that $a+ b=d+ a=b+ e$;

\item[{\rm (GP3)}] if $ a+ b$ and $a+ c $ exist and are equal, then
$b=c.$ If $b+ a$ and $c+ a $ exist and are equal, then $b=c$;

\item[{\rm (GP4)}] if $ a+ b$  exists and $a+ b=0$, then $a=b=0$;

\item[{\rm (GP5)}] $ a+ 0$ and $0+ a $ exist and both are equal to $a.$
\end{enumerate}

A GPEA $E$ is {\it trivial} if $E=\{0\}$ and it is {\it non-trivial} if $|E|\ge 2$.

In the same way as for pseudo effect algebras,  we introduce a binary relation $\le$ in a GPEA $E$: For $a,b\in E,$ we define $a\le b$ if
and only if there is an element $c\in E$ such that $a+ c=b.$ Equivalently, there exists an element $d\in E$ such that $d+ a=b.$ Then $\le$ is a partial order on $E$.

If the partial operation $+$ on $E$ is commutative, then a GPEA $E$ is said to be a {\it generalized effect algebra}, GEA for short.

We introduce also two partial binary operations $\minusli$ and $\minusre$ on a GPEA $E$ in the same way as for pseudo effect algebras: For any $a,b\in E$, $a\minusre b$ is defined if and only if
$b\minusli a$ is defined if and only if $a\le b$, and in such a case we have $(b\minusli a)+ a=b= a+ (a\minusre b).$  Then $a=(b\minusli a)\minusre b=b\minusli(a\minusre b).$

For example, if $G$ is a po-group and  $G^+:=\{g \in G: g\ge 0\}$ is the positive cone of $G,$ then $(G^+;0,+)$ is a GPEA where $+$ is the restriction of the group addition $+$ in $G$ to $G^+$ in the obvious sense. Similarly, let $G_0$ be a non-empty subset of $G^+$ such that for all $a, b \in  G_0$, where $b \le a$, also $a - b, -b + a \in  G_0$. Then $(G_0;+,0),$ where $+$ is the group addition restricted to those pairs of elements of $G_0$ whose sum is again in $G_0$, is a GPEA, see \cite[Ex. 2.3]{DvVe4}. In particular, if $u\in G^+$, $u>0,$ and $[0,u):=\{g \in G: 0\le g<u\}$, then $([0,u);+0)$ is a GPEA which has no top element, therefore, it is not a pseudo effect algebra.  If $E$ is a PEA, then the same is true for $(E\setminus\{1\}; +,0)$. (In the latter two cases, $+$ is the restriction of the original addition in the natural sense.)

Let $E,F$ be GPEAs. A mapping $h:E\to F$ is said to be a {\it homomorphism} of PGEAs if  $h(a+b)=h(a)+h(b)$ whenever $a+b$ is defined in $E$; we note that $h(0)=0$. If $E,F$ are PEAs, then a mapping $h:E \to F$ such that (i) $h(a+b)=h(a)+h(b)$ whenever $a+b$ is defined in $E$, and (ii) $h(1)=1$ is said to be a {\it homomorphism} of PEAs. If $h$ and $h^{-1}$ are homomorphisms, then $h$ is said to be an {\it isomorphism}.

A subset $A$ of a GPEA $E$ is a {\it sub-GPEA} of $E$ if (i) $0 \in A$, and (ii) if from three elements $x, y, z \in E$ such that $x + y = z$ at least two are in $A$, then all $x, y, z \in A$. If $E$ is a PEA, then $A\subseteq E$ is a {\it sub-PEA} of $E$ iff (i) $1\in A,$ (ii) $a\in A$ implies $a^-,a^\sim \in A$, (iii) if $a,b \in A$ and $c = a+b,$ then $c\in A$.

We remind that a subset $P_0$ of a poset $P$ is called {\it convex} if, for any two elements  $a,b \in P_0$ and any $c\in P$ such that $a\le c\le b$, we have $c\in P_0$. We note that if $G$ is a po-group and $G_0$ is a convex subset of $G^+$ containing $0$, then $(G_0; +,0)$ is a GPE-algebra.

We recall that a poset $P$ is said to be {\it directed} (more precisely {\it upwards directed)} if, for all $a,b \in P,$ there is an element $c \in P$ such that $a,b \le c$.

By \cite{XLGRD}, a GPEA or a PEA  $E$ is said to be {\it weakly commutative} if $x+y$ is defined in $E$ iff $y+x$ is defined in $E$. For example, if $G$ is a po-group that is not Abelian, then $G^+$ is a weakly commutative GPEA that is not commutative. It is easy to show that a pseudo effect algebra $E$ is weakly commutative if and only if $E$ is symmetric.

We say that a GPEA $E$ satisfies

\begin{enumerate}
\item[(i)]
the {\it Riesz Interpolation Property} (RIP for short) if, for $a_1,a_2, b_1,b_2\in E,$  $a_1,a_2 \le b_1,b_2$  implies there exists an element $c\in E$ such that $a_1,a_2 \le c \le b_1,b_2;$

\item[(ii)]
\RDP$_0$  if, for $a,b,c \in E,$ $a \le b+c$, there exist $b_1,c_1 \in E,$ such that $b_1\le b,$ $c_1 \le c$ and $a = b_1 +c_1;$

\item[(iii)]
\RDP\  if, for all $a_1,a_2,b_1,b_2 \in E$ such that $a_1 + a_2 = b_1+b_2,$ there are four elements $c_{11},c_{12},c_{21},c_{22}\in E$ such that $a_1 = c_{11}+c_{12},$ $a_2= c_{21}+c_{22},$ $b_1= c_{11} + c_{21}$ and $b_2= c_{12}+c_{22};$ this property will be formally denoted by the following table:

$$
\begin{matrix}
a_1  &\vline & c_{11} & c_{12}\\
a_{2} &\vline & c_{21} & c_{22}\\
  \hline     &\vline      &b_{1} & b_{2}
\end{matrix}\ \ ;
$$

\item[(iv)]
\RDP$_1$  if, for all $a_1,a_2,b_1,b_2 \in E$ such that $a_1 + a_2 = b_1+b_2,$ there are four elements $c_{11},c_{12},c_{21},c_{22}\in E$ such that $a_1 = c_{11}+c_{12},$ $a_2= c_{21}+c_{22},$ $b_1= c_{11} + c_{21}$ and $b_2= c_{12}+c_{22}$, and $0\le x\le c_{12}$ and $0\le y \le c_{21}$ imply  $x+y=y+x;$

\item[(v)]
\RDP$_2$  if, for all $a_1,a_2,b_1,b_2 \in E$ such that $a_1 + a_2 = b_1+b_2,$ there are four elements $c_{11},c_{12},c_{21},c_{22}\in E$ such that $a_1 = c_{11}+c_{12},$ $a_2= c_{21}+c_{22},$ $b_1= c_{11} + c_{21}$ and $b_2= c_{12}+c_{22}$, and $c_{12}\wedge c_{21}=0.$

\end{enumerate}

If, for $a,b \in E,$ we have for all $0\le x \le a$ and $0\le y\le b,$ $x+y=y+x,$ we denote this property by $a\, \mbox{\rm \bf com}\, b.$

If we change a GPEA $E$ to a positive cone $G^+$ of a po-group $G,$ we say that $G$ satisfies the analogous type of the Riesz Decomposition Property.

By \cite[Prop 4.2]{DvVe1} for directed po-groups, we have
$$
\RDP_2 \quad \Rightarrow \RDP_1 \quad \Rightarrow \RDP \quad \Rightarrow \RDP_0 \quad \Leftrightarrow \quad  \RIP,
$$
but the converse implications do not hold, in general.  A directed po-group $G$ satisfies \RDP$_2$ iff $G$ is an $\ell$-group, \cite[Prop 4.2(ii)]{DvVe1}.

The fundamental result of theory of pseudo effect algebras, which is a bridge between pseudo effect algebras satisfying RDP$_1$ and the class of unital po-groups satisfying RDP$_1$, is the following representation theorem \cite[Thm 7.2]{DvVe2}:

\begin{theorem}\label{th:2.1}
For every pseudo effect algebra $E$ with \RDP$_1,$ there is a unique $($up to isomorphism of unital po-groups$)$ unital po-group $(G,u)$ with \RDP$_1$\ such that $E \cong \Gamma(G,u).$

In addition, $\Gamma$ defines a categorical equivalence between the category of pseudo effect algebras with \RDP$_1$ and the category of unital po-groups with \RDP$_1.$
\end{theorem}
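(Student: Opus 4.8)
The plan is to reconstruct, in the non-commutative setting, the Mundici–Ravindran passage from a unit interval back to its enveloping group, i.e. to build from $E$ a unital po-group $(G,u)$ with \RDP$_1$ and an isomorphism $E\cong\Gamma(G,u)$, and then to upgrade this construction to a functor quasi-inverse to $\Gamma$. Concretely I would take $G$ to be the \emph{universal group} of $E$: the group generated by symbols $[a]$, $a\in E$, subject to $[a]+[b]=[a+b]$ whenever $a+b$ is defined in $E$ (so $[0]=0$), together with the canonical map $\iota\colon E\to G$, $a\mapsto[a]$. This group carries a universal property: every additive map from $E$ into (the positive cone of) a po-group factors uniquely through $\iota$, and it is this property that will later deliver uniqueness and the categorical equivalence almost for free. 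The order on $G$ is defined by declaring $G^+$ to be the subsemigroup generated by $\iota(E)$, whose elements I would represent concretely by \emph{good sequences} $(a_1,\dots,a_m)$ over $E$ — the non-commutative analogues of Mundici's normal forms, each $a_{i+1}$ filling only the gap left by $a_i$ so that the formal sum is already reduced — with addition given by a ``carry'' rule that respects the order of the terms.

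The heart of the argument is to show that this addition of good sequences is well defined, associative and cancellative, and that $G^+$ so described is a genuine positive cone. Here \RDP$_1$ does all the work: its refinement table lets one take common refinements of two competing representations of the same element, while the side condition $c_{12}\,\com\,c_{21}$ on the off-diagonal entries is exactly what licenses the reorderings demanded by the carry rule, since those reorderings only ever exchange elements dominated by such commuting off-diagonal terms. Granting cancellativity and directedness of $G^+$, one realises $G$ as the group of formal differences of good sequences, ordered by $\mathbf a-\mathbf b\le\mathbf c-\mathbf d$ iff $\mathbf a+\mathbf d\le\mathbf c+\mathbf b$ in $G^+$, and checks antisymmetry. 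Setting $u:=\iota(1)=(1)$, I would prove that $u$ is a strong unit and that $\iota$ restricts to an isomorphism $E\cong\Gamma(G,u)$: injectivity of $\iota$ follows from \RDP$_1$, and surjectivity onto $\Gamma(G,u)$ from the fact that $0\le\mathbf a\le u$ forces the good sequence $\mathbf a$ to have length at most one.

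Next I would transfer \RDP$_1$ from $E$ to $G$ (equivalently to $G^+$) by decomposing a refinement of an identity $\mathbf a_1+\mathbf a_2=\mathbf b_1+\mathbf b_2$ of good sequences into termwise refinements in $E$, and reassembling them; the \com-clause guarantees that the reassembled table again satisfies the \RDP$_1$ side condition. For uniqueness, suppose $(H,v)$ is another unital po-group with \RDP$_1$ and $E\cong\Gamma(H,v)$. Since $H$ is generated as a group by $H^+$, each positive element of $H$ is uniquely a good sequence over $\Gamma(H,v)$, and these add by the same carry rule; the universal property of $\iota$ then yields a unique unital po-group isomorphism $(G,u)\cong(H,v)$ extending the identity on the unit interval. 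Finally, for the equivalence I would note that $\Gamma$ is functorial (a unital po-group morphism restricts to a PEA-morphism on unit intervals), define the quasi-inverse on morphisms by extending a PEA-morphism $E\to E'$ to the induced map on good sequences — a group homomorphism precisely because it respects the carry rule — and verify that the two families $E\cong\Gamma(G,u)$ and $(G,u)\cong(G_{\Gamma(G,u)},u_{\Gamma(G,u)})$ are natural.

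The main obstacle throughout is non-commutativity: every place where the abelian Mundici–Ravindran theory freely permutes summands must here be re-justified from the \com\ side condition of \RDP$_1$, and the single hardest point is the proof that addition of good sequences is associative and cancellative, for this is exactly where one must verify that all reorderings forced by successive refinements involve only pairwise-commuting elements and hence leave the value of the sum unchanged.
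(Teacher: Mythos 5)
First, a point of comparison: the paper does not prove this theorem at all --- it is imported from the literature as \cite[Thm 7.2]{DvVe2}, so your proposal can only be measured against that original proof of Dvure\v{c}enskij and Vetterlein. Your overall architecture --- a universal group generated by symbols $[a]$ subject to $[a]+[b]=[a+b]$, a positive cone built from finite sums of generators, \RDP$_1$ as the engine licensing refinements and reorderings, and the universal property delivering uniqueness and the categorical equivalence --- is indeed the architecture of that proof, and your diagnosis of where the difficulty sits (associativity/cancellativity of the extended addition in a non-commutative world) is accurate.

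However, your central device has a genuine gap. Mundici-style good sequences, and the carry rule that adds them, are defined through lattice operations: the basic carry move replaces a pair $(a,b)$ by $(a\oplus b,\, a\odot b)$, and ``the part of $b$ filling the gap left by $a$'' is $a^\sim\wedge b$. These operations are available exactly when the algebra satisfies \RDP$_2$, equivalently when the representing group is an $\ell$-group --- Mundici's situation --- but a pseudo effect algebra with \RDP$_1$ need not be a lattice, so elements of $G^+$ admit no such normal form and no carry addition can even be written down; your surjectivity step (``$0\le \mathbf a\le u$ forces the good sequence to have length at most one'') collapses for the same reason. The proof in \cite{DvVe2} avoids normal forms entirely: it works with arbitrary finite words over $E$ modulo the congruence generated by \RDP$_1$-refinement, and the hard core is showing this quotient semigroup is cancellative and directed. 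Note also that in the non-commutative setting the passage from semigroup to group is not the formal-difference (Grothendieck) construction you describe --- a non-commutative cancellative semigroup need not embed in any group; one needs the Ore condition, which is precisely where directedness and the $\com$ clause of \RDP$_1$ enter --- and your order formula ``$\mathbf a-\mathbf b\le\mathbf c-\mathbf d$ iff $\mathbf a+\mathbf d\le\mathbf c+\mathbf b$'' is itself an illegitimate commutative rearrangement. In short: right skeleton, but the normal forms and carry rule you lean on do not exist under \RDP$_1$, and the group-embedding step needs Ore fractions rather than formal differences.
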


A similar result as Theorem \ref{th:2.1}, Theorem \ref{th:2.2} below, holds also for directed PEAs, see \cite[Thm 4.8, Prop 5.3, Thm 6.4]{DvVe4}. To show this, we need the following notions. By the couple $(G,G_0)$ we mean a directed po-group $G$  with a fixed directed convex subset $G_0$ of the positive cone such that $0 \in G_0$ and $G_0$ generates $G$ as a po-group. We note that $G_0$ is always a GPEA which is a subalgebra of $G^+$. If $u$ is a strong unit of $G$, then $(G,[0,u])$ is such an example. $(G,G_0)$ and $(H,H_0)$ are {\it isomorphic}, if there is a po-group isomorphism $f:G\to H$ such that $f(G_0)=H_0.$

By $\mathcal{GPEA}$ we understand the category whose objects are directed GPEAs with RDP$_1$ and morphisms are homomorphisms of GPEAs. By $\mathcal{POG}$ we mean the category whose objects are couples $(G,G_0),$ where $G$ is a directed po-group with RDP$_1$ with a fixed directed convex subset $G_0\subseteq G^+$, $0\in G_0,$ $G_0$ generates $G,$ and morphisms from $(G,G_0)$ into $(H,H_0)$ are homomorphisms $h: G\to H$ of po-groups such that $h(G_0)\subseteq H_0.$ Now by $\Gamma(G,G_0)$ we mean the GPEA $G_0:=(G_0;+,0)$ as defined above.

\begin{theorem}\label{th:2.2}
For every directed generalized pseudo effect algebra $E$ with \RDP$_1,$ there is a unique couple $(G,G_0)$ $($up to isomorphism$)$, where $G$ is a directed po-group  with \RDP$_1$\  with a fixed directed convex subset $G_0\subseteq G^+$ generating $G$  such that $E \cong G_0.$

In addition, $\Gamma$ defines a categorical equivalence between the category $\mathcal{GPEA}$ of pseudo effect algebras with \RDP$_1$ and the category $\mathcal{POG}$ of directed po-groups with \RDP$_1.$
\end{theorem}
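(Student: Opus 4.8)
The plan is to deduce the generalized statement from the interval representation of Theorem~\ref{th:2.1} by presenting a directed GPEA $E$ with \RDP$_1$ as a filtered union of its principal intervals $[0,u]$, each of which is a genuine PEA with \RDP$_1$, and then assembling the associated unital po-groups into a directed colimit. Since $E$ is upward directed and $a\in[0,a]$ for every $a$, the family $\{[0,u]\}_{u\in E}$, ordered by inclusion (with $[0,u]\subseteq[0,u']$ whenever $u\le u'$), is a filtered system whose union is all of $E$.

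First I would verify that each $[0,u]:=\{x\in E:0\le x\le u\}$, equipped with the partial addition of $E$ restricted by the rule that $a+b$ is defined exactly when $a+b\le u$ and with top $u$, is a pseudo effect algebra: (i), (iii) and (iv) follow from (GP1)--(GP3), while (ii) --- the existence and uniqueness of the two complements of $a$ relative to $u$ --- is supplied by $a\minusre u$ and $u\minusli a$ together with the cancellation law (GP3). That \RDP$_1$ descends to $[0,u]$ is immediate, because if $a_1+a_2=b_1+b_2\le u$ the four witnesses $c_{ij}$ obtained from \RDP$_1$ in $E$ satisfy $c_{ij}\le a_i,b_j\le u$ and the commutativity side-condition is inherited verbatim. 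Theorem~\ref{th:2.1} then gives a unital po-group $(G_u,u)$ with $[0,u]\cong\Gamma(G_u,u)$ and with $[0,u]$ generating $G_u$. The inclusions $[0,u]\hookrightarrow[0,u']$ are \emph{additive} (GPEA) homomorphisms but not PEA homomorphisms --- they send the top $u$ to a non-top element of $[0,u']$ --- so to obtain transition maps $G_u\to G_{u'}$ I would invoke the universal-group construction underlying Theorem~\ref{th:2.1}, which is functorial for additive homomorphisms and whose unit $\varphi$ is natural.

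The decisive technical point is that each transition map is a \emph{convex embedding on intervals}: the identification $\Gamma(G_{u'},u')=[0,u']$ forces $\{x\in G_{u'}:0\le x\le u\}$ to coincide with the image of $[0,u]$, since any such $x$ corresponds to an element $\tilde x\in[0,u']\subseteq E$ with $\tilde x\le u$, hence to an element of $[0,u]$. Granting this, I set $G:=\varinjlim_u G_u$, declaring $g\ge0$ in $G$ when some representative is positive at a late enough stage. Because the diagram is filtered, every finite configuration and every identity $a_1+a_2=b_1+b_2$ already lives at a single stage; order-preservation of the transition maps then yields antisymmetry and directedness of $G$, and applying \RDP$_1$ at that stage and pushing the witnesses forward yields \RDP$_1$ in $G$ --- here the convex-embedding property is exactly what lets me pull an arbitrary $x$ with $0\le x\le\bar c_{12}$ back to the stage in order to invoke the \com\ side-condition there. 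Putting $G_0:=\varphi(E)=\varinjlim[0,u]$, generation of $G$ is automatic, convexity of $G_0$ in $G^+$ follows from the convex-embedding property, and the same descent shows $\varphi:E\to G_0$ is a GPEA isomorphism; the reason one lands on a convex generating set rather than a strong unit is precisely that the units $u$ are not preserved by the transition maps, so they never amalgamate into a single unit.

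The hard part will be the convex-embedding lemma together with the compatibility of the \com\ side-condition under the colimit, since \RDP$_1$ is genuinely weaker than \RDP$_2$ and its commutativity clause ranges over the whole group; everything else reduces to routine filtered-colimit bookkeeping. For uniqueness I would use the universal property: an isomorphism $E\cong G_0\cong H_0$ restricts compatibly to every interval, lifts through Theorem~\ref{th:2.1} to coherent isomorphisms $G_u\cong H_u$, and passes to the colimit to give $(G,G_0)\cong(H,H_0)$. The categorical equivalence then follows formally, by checking that a GPEA homomorphism $h:E\to F$ extends stage by stage to a unique morphism in $\mathcal{POG}$ and that $\Gamma$ and the universal-group functor are mutually inverse up to natural isomorphism. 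An alternative avoiding colimits is to build the universal group of $E$ directly as formal words in $E$ modulo the congruence generated by $+$, deriving cancellativity from (GP3); there the entire difficulty migrates into showing that \RDP$_1$ lifts to the group of differences.
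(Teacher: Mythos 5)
First, a point of reference: the paper offers no proof of Theorem~\ref{th:2.2} at all; it is quoted from \cite[Thm 4.8, Prop 5.3, Thm 6.4]{DvVe4}, where the representing po-group of a directed GPEA with RDP$_1$ is obtained by extending the group-representation technique directly to the non-unital setting --- essentially the ``alternative avoiding colimits'' you mention at the end. Your reduction to Theorem~\ref{th:2.1} --- exhausting $E$ by the principal intervals $[0,u]$ (your verification that each is a PEA with RDP$_1$ is correct), representing each as $\Gamma(G_u,u)$, and passing to a filtered colimit --- is therefore a genuinely different route, and its advantage is that the non-unital theorem is deduced from the unital one as a black box, with only po-group arguments and colimit bookkeeping added.

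However, the two facts you lean on are not available in the form you invoke them, and both must be replaced by the same lemma. (a) Theorem~\ref{th:2.1} is an equivalence for \emph{unit-preserving} morphisms, so ``functoriality of the universal-group construction for additive homomorphisms'' is not something the paper gives you. (b) Your convex-embedding property, as stated, concerns only the intervals $\{x\in G_{u'}\colon 0\le x\le \varphi_{u'}(u)\}=\varphi_{u'}([0,u])$; that identity is indeed immediate from $\Gamma(G_{u'},u')\cong[0,u']$, but it does not by itself let you pull an element $0\le x\le c_{12}$ of a later stage back to the earlier one, because $c_{12}$ is an arbitrary positive element of the image, not an element of such an interval. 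Both gaps are closed by the following lemma: let $H_0$ be the subgroup of $G_{u'}$ generated by $[0,\varphi_{u'}(u)]$, ordered by the cone $H_0\cap G_{u'}^+$. Every element of $H_0$ is bounded above by some $n\varphi_{u'}(u)$, and since $G_{u'}$ has RDP, any $z$ with $0\le z\le n\varphi_{u'}(u)$ splits into $n$ summands lying in $[0,\varphi_{u'}(u)]$; hence $H_0^+$ is convex in $G_{u'}^+$, $\varphi_{u'}(u)$ is a strong unit of $H_0$, $H_0$ inherits RDP$_1$ from $G_{u'}$, and $\Gamma(H_0,\varphi_{u'}(u))=\varphi_{u'}([0,u])\cong[0,u]$. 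Fullness and faithfulness of $\Gamma$ in Theorem~\ref{th:2.1} then lift the PEA isomorphism $[0,u]\cong\Gamma(H_0,\varphi_{u'}(u))$ to a unital po-group isomorphism $(G_u,u)\cong(H_0,\varphi_{u'}(u))$: this is your transition map, it is automatically injective with convex image, it is coherent over the directed index set (faithfulness again), and convexity of the image is exactly what legitimizes the pullback of $x$ and $y$ in verifying the commutativity clause of RDP$_1$ for the colimit. With this lemma in place, the rest of your outline (order-embedding of $E\to G$, convexity and generation of $G_0$, uniqueness, and the categorical equivalence) goes through as you describe.
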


We note that in the later theorem, $G_0$ is a directed GPEA which is a subalgebra of the GPEA $G^+$.

We finish this section with a note that a PEA $E$ satisfies RDP$_2$ iff $E$ is a lattice and, for all $a,b \in E$, we have $a\minusli (a\wedge b)=(a\vee b)\minusli a$ and $a\minusre (a\vee b)=(a\wedge b)\minusre a$, \cite[Sect 8]{DvVe2}. In addition, there is a unique unital (up to isomorphism of unital $\ell$-groups) $\ell$-group $(G,u)$ such that $E \cong \Gamma(G,u)$, see \cite{Dvu1}.

\section{Kite Pseudo Effect Algebras}

In this section we define kite pseudo effect algebras starting with a GPEA. We show when this kind of pseudo effect algebras satisfies different types of the Riesz Decomposition Properties.

Let $E$ be a generalized pseudo effect algebra. We denote by $\overline{E}$ an identical copy of $E$ whose elements are of the form $\bar a$ for all $a\in E$, that is $\overline{E} :=\{\bar a\colon a \in E\}$. We assume that $\bar a= \bar b$ iff $a=b$.

Let $I$ be a set. Define an algebra whose
universe is the set $E^I \uplus (\overline E)^I,$ where $\uplus$ denotes the union of disjoint sets.  Let $\lambda,\rho: I \to I$ be bijections. We define two special elements $0 = 0^I:=\langle 0_j\colon j \in I\rangle$ and $1= \bar 0^I:= \langle \bar 0_i\colon i \in I\rangle$, where $0_j=0=0_i$ for all $j,i \in I$. The elements of $E^I$ will be denoted by $\langle f_j\colon j \in I\rangle$ and ones of $(\overline E)^I$ by $\langle \bar a_i\colon i \in I\rangle$, where $f_j,a_i \in E$ for all $i,j\in I$.

We say that a GPEA $E$ is $\lambda,\rho$-{\it weakly commutative}, where $\lambda,\rho:I \to I$ are bijections, if given sequences $\langle f_j: j\in I\rangle, \langle g_j: j\in I\rangle$ and  sequences $\langle \bar a_i\colon i\in I\rangle, \langle \bar b_i\colon i\in I\rangle$ of elements of $E$ and $\overline {E}$, respectively, 
we have the equivalences: (i) Given $i \in I,$ $f_{\rho^{-1}(i)}+a_i$ is defined in $E$  iff $a_i + f_{\lambda^{-1}(i)}$ is defined in $E,$ and (ii) given  $i\in I$, $g_{\lambda^{-1}(i)} +b_i$ is defined in $E$ iff $b_i+ g_{\rho^{-1}(i)}$ is defined in $E$.  For example, if $E=G^+$ for some po-group $G$ or $a+b$ is defined in $E$ for all $a,b\in E$ (that is, $+$ is total), then $E$ is $\lambda,\rho$-weakly commutative. The same is true if $\lambda =\rho$ and $E$ is a GEA or a weakly commutative GPEA. In addition, it is possible to show that if $\lambda\ne \rho$, then $E$ is $\lambda,\rho$-weakly commutative iff $a+b$ is defined in $E$ for all $a,b\in E$.

\begin{theorem}\label{th:3.1}
Let $\lambda,\rho:I\to I$ be bijections and $E$ be a $\lambda,\rho$-weakly commutative generalized pseudo effect algebra. Let us endow the set $(E)^I \uplus (\overline E)^I$ with $0=0^I,$ $1=(\bar 0)^I$ and with a partial operation $+$ as follows,

$$\langle \bar a_i\colon i\in I\rangle + \langle \bar b_i\colon i\in I\rangle \eqno(I)$$
is not defined;

$$ \langle \bar a_i\colon i\in I\rangle + \langle f_j\colon j\in I\rangle:= \langle \overline{f_{\rho^{-1}(i)}\minusre a_i}\colon i\in I\rangle \eqno(II)
$$
whenever  $f_{\rho^{-1}(i)}\le a_i$  for all $i \in I;$

$$ \langle f_j\colon j\in I\rangle+ \langle \bar a_i\colon i\in I\rangle  := \langle \overline{a_i \minusli f_{\lambda^{-1}(i)}} \colon i\in I\rangle \eqno(III)
$$
whenever  $f_{\lambda^{-1}(i)} \le a_i$  for all $i \in I,$

$$
\langle f_j\colon j\in I\rangle + \langle g_j\colon j\in I\rangle:= \langle f_j+ g_j\colon j\in I\rangle
\eqno(IV)
$$
whenever $f_j+g_j$ is defined in $E$ for all $j \in I$.

Then the partial algebra $K^{\lambda,\rho}_I(E):=((E)^I \uplus (\overline E)^I; +,0,1)$ is a pseudo effect algebra.

In addition, for the negations in the kite $K_{I}^{\lambda,\rho}(E),$ we have
\begin{align*}
\langle \bar a_i\colon i\in I\rangle^\sim &= \langle a_{\rho(j)}\colon j\in I\rangle\\
\langle \bar a_i\colon i\in I\rangle^-&= \langle a_{\lambda(j)}\colon j\in I\rangle\\
\langle f_j\colon j\in I\rangle^\sim &= \langle \bar f_{\lambda^{-1}(i)}\colon i\in I\rangle\\
\langle f_j\colon j\in I\rangle^-&= \langle \bar f_{\rho^{-1}(i)}\colon i\in I\rangle,
\end{align*}
and $K^{\lambda,\rho}_I(E)$ is symmetric if and only if $\lambda=\rho$.
\end{theorem}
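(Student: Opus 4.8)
My plan is to check the four defining conditions (i)--(iv) of a pseudo effect algebra for $K^{\lambda,\rho}_I(E)$, in each case pushing the verification down to a coordinatewise statement in the GPEA $E$ and using only (GP1)--(GP5), the one-sided differences $\minusli,\minusre$, and the cancellation law (GP3). The universe splits into a \emph{lower} part $E^I$ and an \emph{upper} part $(\overline E)^I$, and $+$ has a rigid profile: a sum is undefined as soon as two upper elements meet (case (I)), the two genuinely mixed profiles return an upper element (cases (II),(III)), and lower${}+{}$lower returns a lower element (case (IV)). I expect the four negation formulas to fall out of the analysis of (ii), and the symmetry criterion to follow by reading those formulas off.

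For condition (i) I would run through the eight profiles recording which of $a,b,c$ lie in $(\overline E)^I$. Whenever at least two of them are upper, an upper${}+{}$upper meeting is unavoidable, so both $(a+b)+c$ and $a+(b+c)$ are undefined and the equivalence is vacuous. In each of the four profiles with at most one upper entry---the all-lower one and the three carrying a single $\overline E$-entry---the two sides reduce coordinatewise to a single GPEA statement: writing the active entries as $x,y,z\in E$, the definedness condition for $(a+b)+c$ and that for $a+(b+c)$ are \emph{each} equivalent to the existence of one $w\in E$ with $y+w+x=z$ (or the corresponding one-sided form), after which equality of the two outputs is forced by (GP3). Here the bijectivity of $\lambda,\rho$ serves only to translate ``for all $j$'' into ``for all $i$''; no commutativity hypothesis is needed.

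Condition (ii) I would settle by solving $x+d=1$ and $e+x=1$ with $1=\bar 0^I$ directly. For $x=\langle f_j\colon j\in I\rangle$ the unique solutions are $d=\langle \bar f_{\lambda^{-1}(i)}\colon i\in I\rangle$ and $e=\langle \bar f_{\rho^{-1}(i)}\colon i\in I\rangle$, while for $x=\langle \bar a_i\colon i\in I\rangle$ they are $\langle a_{\rho(j)}\colon j\in I\rangle$ and $\langle a_{\lambda(j)}\colon j\in I\rangle$; uniqueness is immediate from (GP3), and since $d=x^\sim$ and $e=x^-$ this already yields the four displayed negations. Condition (iv) is equally direct, as $f\le 0$ in $E$ forces $f=0$ by (GP4). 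The crux is condition (iii). For the all-lower sum (case (IV)) both witnesses $d,e$ are produced coordinatewise by (GP2) inside $E$. For a mixed sum, say $a=\langle \bar a_i\rangle$, $b=\langle f_j\rangle$ with $a+b=\langle \overline{q_i}\colon i\in I\rangle$ and $q_i:=f_{\rho^{-1}(i)}\minusre a_i$ (so $f_{\rho^{-1}(i)}+q_i=a_i$), one witness is again handed over by (GP2), but building the other forces us to assert that the \emph{reversed} partial sum $q_i+f_{\lambda^{-1}(i)}$ is defined, knowing only that $f_{\rho^{-1}(i)}+q_i$ is. This is precisely what $\lambda,\rho$-weak commutativity delivers: applying its defining equivalence to the sequences $\langle f_j\rangle$ and $\langle \bar q_i\rangle$ converts the definedness of $f_{\rho^{-1}(i)}+q_i=a_i$ into that of $q_i+f_{\lambda^{-1}(i)}$, and the lower${}+{}$upper profile is handled symmetrically. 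I expect this to be the main obstacle, and the single point at which the hypothesis is genuinely consumed.

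Finally, for the symmetry statement I would compare the two families of negations from (ii). For a lower element one has $\langle f_j\rangle^\sim=\langle \bar f_{\lambda^{-1}(i)}\rangle$ and $\langle f_j\rangle^-=\langle \bar f_{\rho^{-1}(i)}\rangle$, so $x^-=x^\sim$ for every $x$ compels $f_{\lambda^{-1}(i)}=f_{\rho^{-1}(i)}$ for all $i$ and all $f$; choosing $f$ with two distinct entries (possible once $E\neq\{0\}$) gives $\lambda^{-1}=\rho^{-1}$, i.e.\ $\lambda=\rho$. The converse is immediate, since $\lambda=\rho$ makes the corresponding formulas coincide on both the lower and the upper part, so $K^{\lambda,\rho}_I(E)$ is symmetric if and only if $\lambda=\rho$.
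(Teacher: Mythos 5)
Your proposal is correct and follows essentially the same route as the paper's proof: a coordinatewise verification of axioms (i)--(iv), with the negation formulas obtained by solving $x+d=1=e+x$ directly, and with $\lambda,\rho$-weak commutativity consumed exactly where the paper uses it, namely in producing the second witness (the element $b_i=(f_{\rho^{-1}(i)}\minusre a_i)+f_{\lambda^{-1}(i)}$) for axiom (iii), the first witness coming from (GP2) alone. Your only refinements are organizational: the uniform ``existence of $w$'' criterion covering all non-vacuous associativity profiles (the paper works out one representative case and declares the rest simple), and the explicit observation that the ``only if'' half of the symmetry claim requires $E\neq\{0\}$, a point the paper's one-line argument glosses over.
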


\begin{proof}
(i) To prove  associativity of $+$, we have eight cases, and it is enough to prove only the following case  because all others are simple.

\begin{align*}
(\langle f_j\colon j\in I\rangle + \langle \bar b_i\colon i\in I\rangle)&+
\langle h_j\colon j\in I\rangle
=\langle \overline{b_i \minusli f_{\lambda^{-1}(i)}}\colon i\in I\rangle + \langle h_j\colon j\in I\rangle\\
&=\langle \overline {h_{\rho^{-1}(i)} \minusre (b_i \minusli f_{\lambda^{-1}(i)})}\colon i\in I\rangle.\\
\end{align*}
If the left hand side of the first line exists, then $f_{\lambda^{-1}(i)}\le b_i$ and $h_{\rho^{-1}(i)}\le b_i\minusli f_{\lambda^{-1}(i)}.$ The second inequality entails $h_{\rho^{-1}(i)}\le b_i \minusli f_{\lambda^{-1}(i)}^{-1} \le b_i$ for $i \in I$ so that $ \langle \bar b_i\colon i\in I\rangle+
\langle h_j\colon j\in I\rangle$ and $\langle f_j\colon j\in I\rangle + (\langle \bar b_i\colon i\in I\rangle+
\langle h_j\colon j\in I\rangle)$ are defined, and the left hand side coincides with

$$\langle f_j\colon j\in I\rangle + (\langle \bar b_i\colon i\in I\rangle+
\langle h_j\colon j\in I\rangle).
$$

Conversely, let the later elements be defined, then $h_{\rho^{-1}(i)} \le b_i$ and $f_{\lambda^{-1}(i)} \le h_{\rho^{-1}(i)}\minusre b_i.$ Then $h_{\rho^{-1}(i)} + f_{\lambda^{-1}(i)} \le b_i$ and $f_{\lambda^{-1}(i)} \le h_{\rho^{-1}(i)} + f_{\lambda^{-1}(i)} \le b_i,$ we have $h_{\rho^{-1}(i)}\le b_i \minusli f_{\lambda^{-1}(i)}.$ This yields that the elements $\langle f_j\colon j\in I\rangle + \langle \bar b_i\colon i\in I\rangle$ and $(\langle f_j\colon j\in I\rangle + \langle \bar b_i\colon i\in I\rangle)+
\langle h_j\colon j\in I\rangle$ are defined, and the last expression coincides with $(\langle f_j\colon j\in I\rangle + \langle \bar b_i\colon i\in I\rangle)+\langle h_j\colon j\in I\rangle$.

(ii) If we define the left and right negations  as it is indicated,  we have  $x^- + x=1=x+x^\sim.$ Their uniqueness can be proved thanks to uniqueness properties in the GPEA $E$.

(iii) Assume $x+y$ is defined. For example, let $\langle \bar a_i\colon i\in I\rangle + \langle f_j\colon j\in I\rangle:= \langle \overline{f_{\rho^{-1}(i)}\minusre a_i}\colon i\in I\rangle$ be defined.

Since $a_i = f_{\rho^{-1}(i)}+(f_{\rho^{-1}(i)}\minusre a_i)\in E$, from the $\lambda,\rho$-weak commutativity we conclude that
the element $b_i = (f_{\rho^{-1}(i)}\minusre a_i)+ f_{\lambda^{-1}(i)}$ is defined in $E$, too. It is easy to show that the element
$h_j=(f_{\rho^{-1}(\lambda(j))}\minusre a_{\lambda(j)})\minusre a_{\lambda(j)}$ is defined in $E$. Therefore,
$$
\langle h_j\colon j\in I\rangle + \langle \bar a_i\colon i\in I\rangle= \langle \bar a_i\colon i\in I\rangle + \langle f_j\colon j\in I\rangle =
\langle f_j\colon j\in I\rangle + \langle \bar b_i\colon i\in I\rangle.
$$

In a dual way, we proceed with the case $\langle g_j\colon j \in I\rangle + \langle \bar b_i \colon i \in I\rangle$; the third case is evident.

(iv) Let $1 +x$ or $x+1$ be defined, then it easy to show that $x=0.$

Summarizing (i)--(iv), we see $K^{\lambda,\rho}_I(E)$ is a pseudo effect algebra.

Using the negations, we see that $K^{\lambda,\rho}_I(E)$ is symmetric iff $\lambda=\rho$.
\end{proof}

\begin{remark}\label{re:3.2}
{\rm (1) From Theorem \ref{th:3.1} and from its proof of (iii), we see that if $E = G^+$ for some po-group $G$, then $\langle f_j\colon j \in J\rangle \le \langle \bar a_i\colon i \in I\rangle$. In a general case, $\langle f_j\colon j \in J\rangle \le \langle \bar a_i\colon i \in I\rangle$ if and only if $a_i + f_{\lambda^{-1}(i)}$ is defined in $E$ for all $i \in I$, equivalently, $f_{\rho^{-1}(i)}+a_i$ is defined in $E$ for all $i \in I$.

(2) $\langle f_j\colon j \in J\rangle \le \langle \bar a_i\colon i \in I\rangle$ for all $\langle f_j\colon j \in J\rangle, \langle \bar a_i\colon i \in I\rangle$ iff $+$ is a total operation. Consequently, e.g. if $E=[0,1]$, $|I|=1$, then $\langle 0,6\rangle \not\le \langle \overline{0,6}\rangle$. }
\end{remark}

The resulting pseudo effect algebra $K^{\lambda,\rho}_I(E)$ from Theorem \ref{th:3.1} is said to be a {\it kite pseudo effect algebra} or more precisely a {\it kite pseudo effect algebra} of $E$. We note that Theorem \ref{th:3.1} generalizes the construction of kite pseudo effect algebras studied in \cite{DvuK,DvHo}. More precisely, if $E=G^+,$ where $G$ is a po-group, then  $G^+$ is $\lambda,\rho$-weakly commutative, and $K_{I}^{\lambda,\rho}(G^+)$ and the kite $K_{I}^{\lambda,\rho}(G)_{ea}$ defined in \cite{DvuK, DvHo} coincide.

We present some examples of kites which are connected with the case when $E$ is the positive cone of some po-group. We start with the case $E=\{0\},$ then $E$ is $\lambda,\rho$-weakly commutative and $K^{\lambda,\rho}_I(E)$ is a two-element Boolean algebra.

\begin{example}\label{ex:3.5}
Let $|I|=n$ for $n\ge 2,$ and $\lambda(i)=i,$ $\rho(i) = i-1\ (\mathrm{mod}\, n).$
We put $G_n = \mathbb Z \lex (\mathbb Z^n)$ which is ordered lexicographically. We define the addition $*$ on $G_n$ as follows

$$
(m_1,x_0,\ldots,x_{n-1})*(m_2,y_0,\ldots,y_{n-1})=(m_1+m_2,x_0+y_{0+m_1},\ldots,
x_{n-1}+y_{n-1+m_1}),
$$
where addition of the subscripts is performed by $\mathrm{mod}\, n.$
Then $G_n$ with $*$ is an $\ell$-group with the inverse given by $-(m,a_0,\ldots,a_{n-1})=(-m,-a_{-m},\ldots,-a_{n-1-m}),$ and the element $u_n=(1,0,\ldots,0)$ is a strong unit.
Then $K_{I}^{\rho,\lambda}(\mathbb Z^+)$ is isomorphic to the pseudo effect algebra $\Gamma(G_n,u_n)$ with \RDP$_2.$
\end{example}

\begin{example}\label{ex:3.8}
Let $I=\mathbb Z$ and put $\lambda(i)=i$ and $\rho(i)=i-1,$ $i \in I.$ Then the kite pseudo effect algebra $K_\mathbb Z^{\lambda,\rho}(\mathbb Z^+)$  satisfies \RDP$_2.$

Define $W(\mathbb Z):=\mathbb Z\lex \mathbb Z^\mathbb Z,$ and let  multiplication $*$ on it be defined as follows: $(m_1,x_i)*(m_2,y_i)=(m_1+m_2, x_i+y_{i+m_1}).$ Then $(W(\mathbb Z);(0),*)$ is an $\ell$-group, called the wreath product of $\mathbb Z$ by $\mathbb Z$ {\rm \cite[Ex 35.1]{Dar}}, with strong unit $u=(1,(0))$, and the kite pseudo effect algebra $K_\mathbb Z^{\lambda,\rho}(\mathbb Z^+)$ is isomorphic to $\Gamma(W(\mathbb Z),u).$
\end{example}

The following result describes the Riesz Decomposition Properties of kite pseudo effect algebras depending on original $E$.

\begin{theorem}\label{th:3.5}
Let a set $I,$ bijections $\lambda,\rho: I \to I$ be given and let $E$ be a directed GPEA.
{\rm (1)} If $E$ is $\lambda,\rho$-weakly commutative and the kite pseudo effect algebra $K^{\lambda,\rho}_I(E)$ satisfies \RDP $($or \RDP$_1$ and \RDP$_2$, respectively$)$, then $E$ satisfies {\rm RDP} $($or \RDP$_1$ and \RDP$_2$, respectively$)$.

{\rm (2)} If $+$ is total and $E$ satisfies {\rm RDP} $($or \RDP$_1$ and \RDP$_2$, respectively$)$, then $K^{\lambda,\rho}_I(E)$ satisfies \RDP\, $($or \RDP$_1$ and \RDP$_2$, respectively$)$.
\end{theorem}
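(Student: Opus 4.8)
The plan is to treat the two directions separately, exploiting one structural fact about the kite: the \emph{down part} $E^I$ is a sub-GPEA on which $+$ is computed coordinatewise via rule $(IV)$, and, reading off the addition rules $(I)$--$(IV)$, a sum of two kite elements lands in $E^I$ if and only if both summands already lie in $E^I$ (equivalently, a sum is an \emph{up} element iff exactly one summand is up, and a sum of two up elements is undefined).

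For part (1) I would first note that if $X_1+X_2=Y_1+Y_2$ with all four elements in $E^I$, then every element of any refinement $C_{11},C_{12},C_{21},C_{22}$ again lies in $E^I$: since $X_2=C_{21}+C_{22}$ and $Y_2=C_{12}+C_{22}$ are down, $C_{12},C_{21},C_{22}$ are down, and then $X_1=C_{11}+C_{12}$ down forces $C_{11}$ down. Hence \RDP\ of the kite restricts to \RDP\ of the product GPEA $E^I$. To descend to $E$ itself, I would encode a refinement problem $a_1+a_2=b_1+b_2$ in $E$ as the same problem on constant sequences in $E^I$, apply the kite's property, and read off the refinement at one fixed coordinate $i_0$. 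For the \RDP$_1$ clause I would use one-coordinate test elements (value $x$ at $i_0$, $0$ elsewhere) to transport the commutativity requirement between $E$ and the kite; for \RDP$_2$ I would use that, once the kite is a lattice, meets of down elements are coordinatewise, so $C_{12}\wedge C_{21}=0$ projects to $c_{12,i_0}\wedge c_{21,i_0}=0$ in $E$ (this also exhibits $E$ as a lattice).

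For part (2), totality of $+$ makes $E$ automatically $\lambda,\rho$-weakly commutative, so the kite is defined, and I would verify the refinement property by a case analysis on the up/down types in an identity $X_1+X_2=Y_1+Y_2$. The common value is either down (all four summands down) or up (exactly one of $X_1,X_2$ and exactly one of $Y_1,Y_2$ up). The all-down case is just \RDP\ of $E^I$, immediate from \RDP\ of $E$ coordinatewise. The substance is a representative mixed case, say $X_1=\langle\bar a_i\rangle$, $Y_1=\langle\bar b_i\rangle$ up and $X_2=\langle f_j\rangle$, $Y_2=\langle g_j\rangle$ down. Rule $(II)$ and the identity give, with $j=\rho^{-1}(i)$, a common $w_j:=f_j\minusre a_{\rho(j)}=g_j\minusre b_{\rho(j)}$, so $a_{\rho(j)}=f_j+w_j$ and $b_{\rho(j)}=g_j+w_j$. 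The type constraints force $C_{11}$ up and $C_{12},C_{21},C_{22}$ down; writing their coordinates $c,p,q,r$, the four kite identities reduce to the $E$-system $q_j+r_j=f_j$, $p_j+r_j=g_j$, and $c_{\rho(j)}=p_j+a_{\rho(j)}=q_j+b_{\rho(j)}$, whose last equality, after cancelling $w_j$, reads $p_j+f_j=q_j+g_j$. To solve this I would use directedness to choose a common upper bound of $f_j,g_j$, write it as $f_j'+f_j=g_j'+g_j$ with left complements, and apply \RDP\ of $E$ to this equality, obtaining $\beta_j,\gamma_j,\delta_j$ with $f_j=\gamma_j+\delta_j$ and $g_j=\beta_j+\delta_j$; then $r_j:=\delta_j$, $q_j:=\gamma_j$, $p_j:=\beta_j$ satisfy the first two equations.

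The main obstacle is precisely the consistency equation $p_j+f_j=q_j+g_j$, which after substitution is equivalent to the commutativity $p_j+q_j=q_j+p_j$ of the off-diagonal pieces; this is exactly what is needed for the single up-element $C_{11}$ to be well defined from both $X_1$ and $Y_1$. The refinement of $E$ must therefore be chosen so that $\beta_j$ and $\gamma_j$ commute. For \RDP$_1$ this is automatic: in the above refinement $c_{12}=\beta_j$ and $c_{21}=\gamma_j$, so the \RDP$_1$-clause forces all pairs below $\beta_j,\gamma_j$, in particular $\beta_j,\gamma_j$ themselves, to commute. For \RDP$_2$ it is automatic as well, since the refinement can be taken with $\beta_j\wedge\gamma_j=0$ and disjoint positive elements commute. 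The plain \RDP\ case is the delicate one and would be handled by the same refinement together with a direct verification that the pieces may be chosen commuting; the remaining mixed configurations (up-elements on opposite sides, involving both $\lambda,\rho$ and both $\minusli,\minusre$) follow by the dual scheme. Finally, I would transport the commutativity (respectively disjointness) data back to the kite to obtain its \RDP$_1$ (respectively \RDP$_2$) clause, completing part (2).
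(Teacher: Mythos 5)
Part (1) of your proposal is correct and is essentially the paper's argument: the paper encodes the refinement problem with one-coordinate sequences supported at a fixed $i_0$ rather than constant sequences, but the mechanism is the same (refinement pieces of down elements are down, read off at $i_0$, transport the RDP$_1$/RDP$_2$ side conditions with one-coordinate test elements). In part (2), your reduction of the mixed case $\langle\bar a_i\rangle+\langle f_j\rangle=\langle\bar b_i\rangle+\langle g_j\rangle$ to the coordinate system $q_j+r_j=f_j$, $p_j+r_j=g_j$, $p_j+a_{\rho(j)}=q_j+b_{\rho(j)}$ is also correct, and your solutions under RDP$_1$ and under RDP$_2$ (where, $+$ being total, $E$ is the positive cone of a directed po-group with RDP$_2$, i.e.\ of an $\ell$-group, so disjoint elements indeed commute) are sound.

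The genuine gap is the plain RDP clause of part (2). You apply RDP of $E$ to $(d_j\minusli f_j)+f_j=(d_j\minusli g_j)+g_j$ and then need the off-diagonal pieces $\beta_j,\gamma_j$ to commute; the promised ``direct verification that the pieces may be chosen commuting'' is exactly the step that cannot be supplied along this route, since plain RDP gives no control whatsoever over commutation of the pieces it produces. The paper removes the difficulty by refining a \emph{different} equality: take $d_i\ge a_i,b_i$ (a common upper bound of the up-coordinates, not of $f_j,g_j$) and apply RDP to $(d_i\minusli a_i)+f_{\rho^{-1}(i)}=(d_i\minusli b_i)+g_{\rho^{-1}(i)}$, an identity obtained by right-cancelling the common element $w_i=f_{\rho^{-1}(i)}\minusre a_i=g_{\rho^{-1}(i)}\minusre b_i$. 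If $c_{i11},c_{i12},c_{i21},c_{i22}$ is any RDP table for this equality, then $d_i=c_{i11}+(c_{i12}+a_i)=c_{i11}+(c_{i21}+b_i)$, and left cancellation (GP3) yields the consistency identity $c_{i12}+a_i=c_{i21}+b_i$ for free; the kite table $C_{11}=\langle\overline{c_{i12}+a_i}\rangle$, $C_{12}=\langle c_{\rho(j)12}\rangle$, $C_{21}=\langle c_{\rho(j)21}\rangle$, $C_{22}=\langle c_{\rho(j)22}\rangle$ then works with no commutativity hypothesis at all (the commutation $c_{i12}+c_{i21}=c_{i21}+c_{i12}$ comes out as a consequence, not as an input). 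So the missing idea is the choice of the refinement instance: bound the up-coordinates $a_i,b_i$, so that the corner piece $c_{i11}$ cancels and does the work you wanted commutativity to do. A smaller inaccuracy: the configuration $\langle\bar a_i\rangle+\langle f_j\rangle=\langle g_j\rangle+\langle\bar b_i\rangle$ (up elements on opposite sides) is not the dual of your case --- there the type constraints force the up piece into the $C_{12}$ slot --- and the paper settles it separately by the trivial table $C_{11}=\langle g_j\rangle$, $C_{21}=\langle 0_j\rangle$, $C_{22}=\langle f_j\rangle$, $C_{12}=\langle\overline{a_i+g_{\lambda^{-1}(i)}}\rangle$, using totality to obtain $a_i+g_{\lambda^{-1}(i)}=f_{\rho^{-1}(i)}+b_i$; that case turns out easier than yours, but it does not follow from your scheme by duality.
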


\begin{proof}
(1) Let $K^{\lambda,\rho}_I(E)$ satisfy RDP and let, for $a_1,a_2,b_1,b_2 \in E,$ we have $a_1+a_2=b_1+b_2.$ Fix an element $i_0\in I$ and for $i=1,2,$ let us define $A_i=\langle f_j^i \colon j\in I\rangle$ by $f_j^i=a_i$ if $j=i_0$ and $f_j^i=0$ otherwise, $B_i= \langle g_j^i \colon j\in I\rangle$ by $g_j^i=a_i$ if $j=i_0$ and $j_j^i=0$ otherwise. Then $A_1+A_2=B_1+B_2$ so that there are $E_{11}=\langle e^{11}_j \colon j\in I\rangle,$ $E_{12}=\langle e^{12}_j \colon j\in I\rangle,$ $E_{21}=\langle e^{21}_j \colon j\in I\rangle,$ $E_{22}=\langle e^{22}_j \colon j\in I\rangle,$ such that $A_1=E_{11}+E_{12},$ $A_2=E_{21}+E_{22},$ $B_1=E_{11}+E_{21},$ and $B_2=E_{12}+E_{22}.$ Using $(IV)$ of Theorem \ref{th:3.1}, from $j=i_0$ we conclude that the elements $e_{uv}=e^{uv}_{i_0},$ $u,v=1,2,$ form the desired decomposition for $a_1,a_2,b_1,b_2$ which proves $E$ satisfies RDP.

To prove that $E$ satisfies RDP$_1$ if $K^{\lambda,\rho}_I(E)$ satisfies RDP$_1,$  we see that for $j\ne i_0$ we have $e^{12}_j=0=e^{21}_j.$ If now $0\le x\le e_{12}$ and $0\le y\le e_{21}$ Putting $X =\langle x_j \colon j\in I\rangle$ and $Y =\langle y_j \colon j\in I\rangle,$ where $x_j=x,$ $y_j=y$ if $j=i_0$ and $x_j=0$ and $y_j=0$ otherwise, we have $X+Y = Y+X$, consequently, $x+y=y+x$ which proves $e_{12}\, \mbox{\rm \bf com}\, e_{21},$ and $E$ satisfies RDP$_1.$

In the same way we prove the case with RDP$_2.$

(2) Conversely, suppose $E$ satisfies RDP (or RDP$_1$) and $+$ is total. Then $E$ is $\lambda,\rho$-weakly commutative.

(i) If $\langle f_j\colon j\in I\rangle + \langle g_j\colon j\in I\rangle= \langle h_j\colon j\in I\rangle + \langle k_j\colon j\in I\rangle$ from $(IV)$ of Theorem \ref{th:3.1} we conclude that for them we can find an RDP decomposition or an RDP$_1$ one.

(ii) Assume $\langle \bar a_i\colon i\in I\rangle + \langle f_j\colon j\in I\rangle= \langle \bar b_i\colon i\in I\rangle +\langle g_j\colon j\in I\rangle.$ Then $a_i,b_i,f_j,g_j \in E$ for all $i,j \in I.$

Now we will use tables so that we will write the elements of the kite in a simpler way: Instead of $\langle \bar a_i\colon i\in I\rangle$ and $\langle f_j\colon j\in I\rangle$ we use $\langle \bar a_i\rangle$ and $\langle f_j\rangle,$ respectively.

Since $E$ is directed, for any $i\in I$, there is an element $d_i\in E$ such that $a_i,b_i \le d_i$.

From $(II)$ of Theorem \ref{th:3.1}, we have $f_{\rho^{-1}(i)}\minusre a_i= g_{\rho^{-1}(i)}\minusre b_i$ for all $i\in I.$ Then $d_i\minusli (f_{\rho^{-1}(i)}\minusre a_i)$ is defined in $E$. We assert $d_i\minusli (f_{\rho^{-1}(i)}\minusre a_i) = (d_i \minusli a_i) +f_{\rho^{-1}(i)} $ for all $i \in I.$ Indeed, we have $(d_i\minusli a_i) +(a_i\minusli f_{\rho^{-1}(i)})+ f_{\rho^{-1}(i)}= d_i = (d_i\minusli b_i) +(b_i\minusli g_{\lambda^{-1}(i)})+ g_{\lambda^{-1}(i)}.$ Therefore, the element $x = (d_i\minusli a_i)+f_{\rho^{-1}(i)}$ is defined in $E$. Hence
\begin{align*}
d_i\minusli a_i &=x\minusli f_{\rho^{-1}(i)}\\
d_i=(x\minusli f_{\rho^{-1}(i)})+a_i=(x\minusli f_{\rho^{-1}(i)})&+ f_{\rho^{-1}(i)} +(f_{\rho^{-1}(i)}\minusre a_i) =x+(f_{\rho^{-1}(i)}\minusre a_i)\\
x&=d_i \minusli (f_{\rho^{-1}(i)} \minusre a_i).
\end{align*}
In the same way we can show that $d_i\minusli (g_{\lambda^{-1}(i)}\minusre b_i) = (d_i \minusli b_i) +g_{\lambda^{-1}(i)}$ and $ d_i\minusli (f_{\rho^{-1}(i)}\minusre a_i) = d_i\minusli (g_{\lambda^{-1}(i)}\minusre b_i)$ so that
$$
(d_i\minusli a_i)+f_{\rho^{-1}(i)} = (d_i\minusli b_i)+g_{\lambda^{-1}(i)}
$$
for all $i \in I$.

Using RDP for $E$, there are $c_{iuv}\in E,$ $u,v=1,2,$ such that we have the decomposition tables:

$$
\begin{matrix}
d_i\minusli a_i  &\vline & c_{i11} & c_{i12}\\
f_{\rho^{-1}(i)} &\vline & c_{i21} & c_{i22}\\
  \hline     &\vline      &d_i\minusli b_i
   & g_{\rho^{-1}(i)}
\end{matrix}\ \ .
$$
From this table we have
\begin{align*}
d_i\minusli a_i =c_{i11}+c_{i12}, \quad & d_i\minusli b_i= c_{i11}+c_{i21}\\
d_i=c_{i11}+c_{i12} +a_i,\quad & d_i = c_{i11}+c_{i21} +b_i\\
c_{i12}+a_i=&\ c_{i21}+b_i.
\end{align*}
This yields that we have an RDP decomposition of the kite for case (ii) as follows

$$
\begin{matrix}
\langle \bar a_i\rangle  &\vline & \langle \overline{c_{i12}+a_i}\rangle  & \langle c_{\rho(j)12} \rangle\\
\langle f_j \rangle &\vline & \langle c_{\rho(j)21}\rangle & \langle c_{\rho(j)22}\rangle\\
  \hline     &\vline      & \langle \overline{b}_i \rangle & \langle g_j \rangle
\end{matrix}\ \ .
$$

It is evident that if $E$ satisfies RDP$_1$ (RDP$_2$), the later table gives also an RDP$_1$ decomposition (RDP$_2$ decomposition).

(iii) Assume $\langle f_j\colon j\in I\rangle + \langle \bar a_i\colon i\in I\rangle= \langle g_j\colon j\in I\rangle +\langle \bar b_i\colon i\in I\rangle.$ We follows the ideas from the proof of case (ii). For any $i \in I,$ there is $d_i\in E$ such that $a_i,b_i\le d_i$.  By $(III)$ of Theorem \ref{th:3.1}, we have $a_i\minusli f_{\lambda^{-1}(i)}=b_i\minusli g_{\lambda^{-1}(i)}$. In the same way as in (ii), we can show that
$$
(a_i\minusli f_{\lambda^{-1}(i)})\minusre d_i= f_{\lambda^{-1}(i)}+(a_i\minusre d_i)= (b_i\minusli f_{\lambda^{-1}(i)})\minusre d_i= g_{\lambda^{-1}(i)}+(b_i\minusre d_i).
$$
The RDP holding in $E$ entails the decompositions

$$
\begin{matrix}
f_{\lambda^{-1}(i)}  &\vline & d_{i11} & d_{i12}\\
a_i\minusre d_i &\vline & d_{i21} & d_{i22}\\
  \hline     &\vline      &g_{\lambda^{-1}(i)} & b_i\minusre d_i
\end{matrix}\ \ .
$$
Therefore, we have $a_i+d_{i21}=b_i+d_{i12}$.
This implies an RDP decomposition for  case (iii)

$$
\begin{matrix}
\langle f_j\rangle  &\vline & \langle d_{\lambda(j)11}\rangle  & \langle d_{\lambda(j)12}\rangle\\
\langle \bar a_i \rangle &\vline & \langle d_{\lambda(j)21}\rangle & \langle \overline{a_i+d_{i21}} \rangle \\
  \hline     &\vline      & \langle g_j \rangle & \langle \overline {b}_i \rangle
\end{matrix}\ \ .
$$

This decomposition is also an RDP$_1$ decomposition whenever $E$ satisfies RDP$_1$; the same is true for RDP$_2$.

(iv) Assume $\langle \bar a_i\colon i\in I\rangle + \langle f_j\colon j\in I\rangle= \langle g_j\colon j\in I\rangle +\langle \bar b_i\colon i\in I\rangle.$

By $(II)-(III)$ of Theorem \ref{th:3.1}, we have $f_{\rho^{-1}(i)}\minusre a_i = b_i \minusli g_{\lambda^{-1}(i)}$ for all $i \in I.$ Due to Remark \ref{re:3.2}(2), $\langle g_j\rangle \le \langle \bar a_i\rangle$, and by property (iii) of pseudo effect algebras, there is $\langle \bar k_i\rangle \in K^{\lambda,\rho}_I(E)$ such that $\langle \bar a_i\rangle = \langle g_j\rangle +\langle \bar k_i\rangle$ so that $k_i = a_i+g_{\lambda^{-1}(i)}\in E$ for all $i\in I$.  In the similar way, there is $\langle \bar l_i\rangle \in K^{\lambda,\rho}_I(E)$ such that  $\langle \bar l_i\rangle + \langle f_j\rangle  =\langle \bar b_i\rangle$. Hence, $l_i = f_{\rho^{-1}(i)}+b_i \in E$ for all $i \in I$. Since $f_{\rho^{-1}(i)}\minusre a_i = b_i \minusli g_{\lambda^{-1}(i)},$ we have $a_i+g_{\lambda^{-1}(i)}= f_{\rho^{-1}(i)}+b_i$ for all $i \in I$.

Therefore, we can use the decomposition table

$$
\begin{matrix}
\langle \bar a_i\rangle  &\vline & \langle g_j\rangle  & \langle \overline{a_i+g_{\lambda^{-1}(i)}}\rangle\\
\langle f_j \rangle &\vline & \langle 0_j\rangle & \langle f_j\rangle\\
  \hline     &\vline      & \langle g_j \rangle & \langle \overline{b}_i \rangle
\end{matrix}\ \ ,
$$
where $0_j=0$ for all $j \in I$, to prove RDP. The table gives also an RDP$_1$ (RDP$_2$ decomposition) decomposition table whenever $E$ satisfies RDP$_1$ (RDP$_2$).
\end{proof}

We note that if $E$ satisfies RDP$_1$, then by Theorem \ref{th:3.5}, the kite pseudo effect algebra $K^{\lambda,\rho}_I(E)$ satisfies also RDP$_1$, so that by Theorem \ref{th:2.1}, there is a unique (up to isomorphism) unital po-group $(G,u)$ such that $K^{\lambda,\rho}_I(E)\cong \Gamma(G,u)$. Please, find it. This was an open problem in \cite{DvuK} even for the case when $E$ is the positive cone of some po-group with RDP$_1$.


\begin{proposition}\label{pr:3.6}
Let $E$ be a GPEA such that $a+b$ exists in $E$ for all $a,b\in E$. The kite pseudo effect algebra $K^{\lambda,\rho}_I(E)$ satisfies \RDP$_0$ if and only if $E$ satisfies \RDP$_0.$
\end{proposition}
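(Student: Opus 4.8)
The plan is to exploit the order structure of the kite under the standing hypothesis that $+$ is total on $E$. First I would record the features of $K^{\lambda,\rho}_I(E)$ that follow from totality: by Remark \ref{re:3.2}(2) every element $\langle f_j\rangle$ of the lower part $E^I$ satisfies $\langle f_j\rangle\le\langle\bar a_i\rangle$ for every element $\langle\bar a_i\rangle$ of the upper part $(\overline E)^I$; moreover the order on the lower part is coordinatewise while on the upper part it is the reverse coordinatewise order (so $\langle\bar a_i\rangle\le\langle\bar b_i\rangle$ iff $b_i\le a_i$ for all $i$). I would also note that anything below a lower element is again lower (an upper summand never yields a lower sum, by cases $(I)$--$(III)$ of Theorem \ref{th:3.1}), that $E$ is directed since $a,b\le a+b$, and that $\RDP_0$ passes coordinatewise to and from powers, so $E$ satisfies $\RDP_0$ iff the lower part $E^I$ does.

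For the implication ``$K^{\lambda,\rho}_I(E)$ has $\RDP_0$ $\Rightarrow$ $E$ has $\RDP_0$'' I would use the single-coordinate embedding of $E$ into the lower part, exactly as in the proof of Theorem \ref{th:3.5}(1). Given $a\le b+c$ in $E$, fix $i_0\in I$ and let $A,B,C\in E^I$ carry $a,b,c$ in coordinate $i_0$ and $0$ elsewhere. Then $A\le B+C$ in the kite, and any $\RDP_0$-decomposition $A=B_1+C_1$ with $B_1\le B$, $C_1\le C$ again lies in the lower part; reading off coordinate $i_0$ yields $a=b_1+c_1$ with $b_1\le b$ and $c_1\le c$.

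For the converse I would take $X\le Y+Z$ in the kite and argue by cases on the types of $Y$ and $Z$, recalling that $Y+Z$ is defined only in cases $(II)$--$(IV)$. If $Y,Z$ are both lower (case $(IV)$), then $Y+Z$, and hence $X$, is lower, and the desired decomposition is precisely an $\RDP_0$-decomposition in $E^I$, supplied by $\RDP_0$ of $E$. If exactly one of $Y,Z$ is upper, then $Y+Z$ is upper and the decomposition is produced explicitly, with no appeal to $\RDP_0$: when $X$ is lower one takes the trivial split $X=X+0$ using $X\le Y$ (resp. $X\le Z$) and a zero summand; when $X$ is upper one keeps the lower summand fixed and adjusts the upper one. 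For instance in case $(II)$ with $Y=\langle\bar a_i\rangle$, $Z=\langle f_j\rangle$, $X=\langle\bar x_i\rangle$, I would set $Z_1=Z$ and $Y_1=\langle\overline{f_{\rho^{-1}(i)}+x_i}\rangle$. Here $X\le Y+Z$ reads $f_{\rho^{-1}(i)}\minusre a_i\le x_i$, whence $a_i\le f_{\rho^{-1}(i)}+x_i$ (so $Y_1\le Y$), while $f_{\rho^{-1}(i)}\minusre(f_{\rho^{-1}(i)}+x_i)=x_i$ by cancellation in $E$ (so $Y_1+Z_1=X$); totality of $+$ guarantees all sums involved exist. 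Case $(III)$ is handled symmetrically with $\lambda$ in place of $\rho$.

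The main obstacle I expect is the bookkeeping of the mixed cases $(II)$/$(III)$: for each position of $X$ one must pick the right explicit decomposition and verify, via the defining formulas and cancellation in $E$, both the comparabilities $Y_1\le Y$, $Z_1\le Z$ and the identity $Y_1+Z_1=X$, while tracking which coordinate is governed by $\lambda$ and which by $\rho$. The conceptual point making these cases cost nothing is that totality forces the whole lower part below the whole upper part (Remark \ref{re:3.2}(2)), so the only genuine interpolation takes place inside the lower part, where it is exactly $\RDP_0$ of $E$.
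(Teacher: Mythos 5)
Your proof is correct, and its overall skeleton matches the paper's: the forward direction via the single-coordinate embedding into the lower part, and for the converse a case analysis on the types of the summands, with the trivial splits $X=X+0$ (resp.\ $X=0+X$) when $X$ is lower and one summand is upper, and coordinatewise \RDP$_0$ of $E$ when everything is lower. Where you genuinely diverge is the case of an upper element below a mixed sum: for $\langle \bar x_i\rangle \le \langle \bar a_i\rangle + \langle f_j\rangle$ the paper (its case (iii), written there as $\langle \bar a_i\rangle \le \langle \bar b_i\rangle + \langle f_j\rangle$) invokes \RDP$_0$ of $E$ on the coordinate inequality $b_i \le f_{\rho^{-1}(i)}+a_i$ to produce parts $f_{\rho^{-1}(i)1}\le f_{\rho^{-1}(i)}$, $a_{i1}\le a_i$, and then assembles the decomposition $\langle \overline{f_{\rho^{-1}(i)1}+a_i}\rangle + \langle f_{j1}\rangle$; you instead keep the lower summand whole, $Z_1=Z$, and replace only the upper summand by $\langle \overline{f_{\rho^{-1}(i)}+x_i}\rangle$, which is verified using just totality, monotonicity and the cancellation law (GP3) --- your checks ($a_i\le f_{\rho^{-1}(i)}+x_i$, hence $Y_1\le Y$, and $f_{\rho^{-1}(i)}\minusre(f_{\rho^{-1}(i)}+x_i)=x_i$, hence $Y_1+Z_1=X$) are sound, and the symmetric computation with $\lambda$ handles case $(III)$. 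This is a small but real simplification: it shows the \RDP$_0$ hypothesis on $E$ is needed only for sums lying entirely in the lower part $E^I$, whereas the paper also spends it on the mixed case (where, as you show, it is not needed). Nothing in the definition of \RDP$_0$ requires the lower summand to be properly decomposed, so both are legitimate; the paper's choice is merely more parallel to the RDP/\RDP$_1$ tables of Theorem \ref{th:3.5}.
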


\begin{proof}
Using $(IV)$ of Theorem \ref{th:3.1}, it is evident that if $K^{\lambda,\rho}_I(E)$ satisfies RDP$_0,$ so does $E.$

Conversely, let $E$ be with RDP$_0.$  We note $\langle x_j\colon j\in I\rangle \le \langle y_j\colon j\in I\rangle$ iff $x_j\le y_j$ for all $j \in I.$

(i) Let $\langle f_j\colon j\in I\rangle \le \langle g_j\colon j\in I\rangle + \langle h_j\colon j\in I\rangle= \langle g_j+h_j\colon j\in I\rangle$ which implies that for all $j\in I,$ there are $g_{j1},h_{j1} \in E$ such that $g_{j1}\le g_j,$  $h_{j1}\le h_j$ and $f_j= g_{j1}+h_{j1}.$ Then $\langle f_j\colon j\in I\rangle = \langle g_{j1}\colon j\in I\rangle + \langle h_{j1}\colon j\in I\rangle$ and $\langle g_{j1}\colon j\in I\rangle \le \langle g_{j}\colon j\in I\rangle$ and $\langle h_{j1}\colon j\in I\rangle \le \langle h_{j}\colon j\in I\rangle.$

(ii) Let $\langle f_j\colon j\in I\rangle \le \langle \bar a_i\colon i\in I\rangle + \langle g_j\colon j\in I\rangle.$ Then $\langle f_j\colon j\in I\rangle= \langle f_j\colon j\in I\rangle + \langle 0_j\colon j\in I\rangle,$ and $\langle f_j\colon j\in I\rangle \le \langle \bar a_i\colon i\in I\rangle$ and $\langle 0_j\colon j\in I\rangle \le \langle g_j\colon j\in I\rangle.$

In the analogous way we can prove the case $\langle f_j\colon j\in I\rangle \le   \langle g_j\colon j\in I\rangle +\langle \bar a_i\colon i\in I\rangle.$

(iii) Let $\langle \bar a_i\colon i\in I\rangle \le \langle \bar b_i\colon i\in I\rangle + \langle f_j\colon j\in I\rangle.$  We note that $\langle \bar x_i\colon i \in I\rangle \le \langle \bar y_i\colon i \in I\rangle$ iff $x_i\ge y_i$ for all $i \in I.$
For each $i \in I,$ we have $a_i \ge f_{\rho^{-1}(i)}\minusre b_i$ and $b_i \le f_{\rho^{-1}(i)}+a_i$.
Using RDP$_0$ for $E,$ we can find positive elements $a_{i1}\le a_i$ and $f_{\rho^{-1}(i)1}\le f_{\rho^{-1}(i)}$ such that $b_i=f_{\rho^{-1}(i)1}+ a_{i1}.$ Then we have $\langle \bar a_i\colon i\in I\rangle = \langle \overline{f_{\rho^{-1}(i)1}+a_i} \colon i\in I\rangle
+ \langle f_{j1} \colon j\in I\rangle,$ and $\langle \overline{ f_{\rho^{-1}(i)1}+a_i} \colon i\in I\rangle \le \langle \bar b_i\colon i\in I\rangle$ and $\langle f_{j1} \colon j\in I\rangle \le \langle f_{j} \colon j\in I\rangle.$

In a dual way, we proceed also with the case $\langle \bar a_i\colon i\in I\rangle \le   \langle f_j\colon j\in I\rangle + \langle \bar b_i\colon i\in I\rangle.$

Summing up all cases, we have that $K^{\lambda,\rho}_I(E)$ satisfies RDP$_0.$
\end{proof}

Let $x$ be an element of the pseudo effect algebra $E$ and $n\ge 0$ be an integer. We define $0x:=0,$ $1x:=x,$ and $(n+1)x:= nx +x$ whenever $nx$ and $nx +x$ are defined in $E.$ An element $x\in E$ is said to be {\it infinitesimal} if $nx$ exists in $E$ for any integer $n \ge 1.$ We denote by $\Infinit(E)$ the set of infinitesimal elements of $E.$

Perfect pseudo effect algebras are characterized as those PEAs where every element is either infinitesimal or co-infinitesimal, see e.g. \cite{DXY, DvKr}.  Such PEAs are often of the form $\Gamma(\mathbb Z\lex G,(1,0))$ for some directed po-group $G,$ c.f. \cite[Thm 5.2]{DvuK}.

If $A,B$ are two subsets of a pseudo effect algebra $E,$ $A+B:=\{a+b: a \in A, b \in B, a+b \in E\},$ and we say that $A+B$ is {\it defined} in $E$ if $a+b$ exists in $E$ for each $a \in A$ and each $b \in B.$ We write $A\leqslant B$ whenever $a\le b$ for all $a\in A$ and all $b \in B.$ In addition, we write $A^-:=\{a^-: a \in A\}$ and $A^\sim :=\{a^\sim : a \in A\}.$

We note that an {\it ideal} of a generalized pseudo effect algebra $E$ is any non-empty subset $I$ of $E$ such that (i) if $x,y \in I$ and $x+y$ is defined in $E,$ then $x+y \in I,$ and (ii) $x\le y \in I$ implies $x\in I.$ An ideal $I$ is {\it maximal} if it is a proper subset of $E$ and it is not a proper subset of any ideal $J\ne E.$ An ideal $I$ is {\it normal} if $x+I=I+x$ for any $x \in E,$ where $x+I:=\{x+y: y \in I,\ x+y $ exists in $E\}$ and in the dual way  we define $I+x.$ The set $\{0\}$ is always a normal ideal, and an ideal $I$ is {\it non-trivial} if $I \ne \{0\}.$

We say that a mapping $s:E \to[0,1]$,  is a {\it state}, where $E$ is a PEA, if (i) $s(a+b)=s(a)+s(b)$ whenever $a+b$ is defined in $E,$ and (ii) $s(1)=1.$ A state $s$ is {\it extremal} if from $s=\alpha s_1+(1-\alpha) s_2,$ where $s_1,s_2$ are states on $E$ and $\alpha \in (0,1),$ we conclude $s=s_1=s_2.$ A state models a finitely additive probability measure. We denote by $\mathcal S(E)$ the set of all states on $E.$ It can happen that $\mathcal S(E)$ is empty. In general, $\mathcal S(E)$ is either empty, or a singleton or an infinite set. We recall that if $E$ is an effect algebra with RDP, then $E$ has at least one state.

If $s$ is a state on $E,$ then the {\it kernel} of $s,$ i.e. the set $\Ker(s):=\{a\in E: s(a)=0\},$ is a normal ideal of $E.$

We say that a pseudo effect algebra $E$ is {\it perfect} if there are two subsets $E_0,E_1$ of $E$ such that $E_0\cap E_1=\emptyset$ and $E=E_0\cup E_1$ such that
\begin{enumerate}
\item[(a)] $E_i^- =E_i^\sim = E_{1-i},$ $i=0,1,$
\item[(b)] if $x \in E_i,$ $y\in E_j$ and $x+y$ is defined in $E,$ then $i+j\le 1$ and $x+y \in E_{i+j}$ for $i,j=0,1,$
\item[(c)]    $E_0+ E_0$ is defined in $E.$
\end{enumerate}
In such a case, we write $E=(E_0,E_1).$ By \cite[Prop 5.1]{DvuK}, $E_0$ consists of all infinitesimal elements of $E$, i.e. $E_0=\Infinit(E)$,  $E$ has a unique state, say $s$, and $s(E_0)=0$ and $s(E_1)=1.$

The following result was established in \cite[Thm 5.3]{DvuK} for the case when $E=G^+.$

\begin{theorem}\label{th:3.7}
Assume that a set $I,$ a bijection $\lambda: I \to I$ and  a directed weakly commutative GPEA $E$ with \RDP$_1$ are given. Assume that $+$ on $E$ is a total operation. There is a unique (up to isomorphism) directed po-group $G^\lambda_I$ with \RDP$_1$ such that
the perfect kite pseudo effect algebra $K^{\lambda,\lambda}_I(E)$ is isomorphic to $\Gamma(\mathbb Z\lex G^\lambda_I,(1,0)).$ In particular, $K^{\lambda,\lambda}_I(E)$ is a perfect PEA.
\end{theorem}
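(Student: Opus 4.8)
The plan is to prove first that $K:=K^{\lambda,\lambda}_I(E)$ is a perfect PEA, and then to read off its representing group from the representation theorem for \RDP$_1$. I would set $K_0:=E^I$ and $K_1:=(\overline E)^I$; these are disjoint and cover $K$. Using the negation formulas of Theorem \ref{th:3.1} together with $\lambda=\rho$, one obtains $K_0^-=K_0^\sim=K_1$ and $K_1^-=K_1^\sim=K_0$, which is condition (a). Condition (b) is immediate from the addition rules $(I)$--$(IV)$: rule $(I)$ says $K_1+K_1$ is undefined, rules $(II)$ and $(III)$ send a defined sum of an element of $K_0$ and an element of $K_1$ into $K_1$, and rule $(IV)$ keeps $K_0+K_0$ inside $K_0$. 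Finally, since $+$ on $E$ is total, rule $(IV)$ shows that $K_0+K_0=E^I$ is everywhere defined, giving condition (c). Hence $K$ is perfect with $K_0=\Infinit(K)$, and by \cite[Prop 5.1]{DvuK} it carries a unique state $s$ with $s(K_0)=0$ and $s(K_1)=1$.

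Because $+$ is total and $E$ has \RDP$_1$, Theorem \ref{th:3.5}(2) gives that $K$ satisfies \RDP$_1$. By Theorem \ref{th:2.1} there is a unique unital po-group $(\mathcal G,u)$ with \RDP$_1$ and $K\cong\Gamma(\mathcal G,u)$; I identify $K$ with $\Gamma(\mathcal G,u)$. The unique state $s$ extends to a po-group homomorphism $\tilde s\colon\mathcal G\to\mathbb Z$ with $\tilde s(u)=1$: its values on $K$ lie in $\{0,1\}$ and $K$ generates $\mathcal G$, so $\tilde s$ is $\mathbb Z$-valued and surjective. I then put $G^\lambda_I:=\Ker(\tilde s)$, a convex, directed, normal subgroup of $\mathcal G$; being a convex directed subgroup of a po-group with \RDP$_1$, it again has \RDP$_1$.

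Next I would identify the positive cone. The set $(G^\lambda_I)^+=\{g\ge 0:\tilde s(g)=0\}$ consists exactly of the infinitesimals of $\mathcal G$ with respect to $u$; since $u$ is a strong unit and \RDP$_1$ holds, every such $g$ decomposes into a finite sum of elements of $[0,u]\cap\Ker\tilde s=K_0$, and because $K_0+K_0$ is defined (condition (c)) these partial sums stay in $K_0$, so $(G^\lambda_I)^+=K_0\cong E^I$. As $\lambda=\rho$, the algebra $K$ is symmetric, hence $u$ is central in $\mathcal G$; combined with the splitting $n\mapsto nu$ of $\tilde s$ this yields a group isomorphism $\mathcal G\cong\mathbb Z\times G^\lambda_I$ via $g\mapsto(\tilde s(g),\,g-\tilde s(g)u)$. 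Finally, each $g\in G^\lambda_I$ is a difference of two elements of $K_0\subseteq[0,u]$, so $-u\le g\le u$; thus for $n\ge 1$ one has $nu+g\ge(n-1)u\ge 0$, while for $n\le-1$ the element is $\le 0$, showing that the order on $\mathcal G$ is precisely the lexicographic one and that $u$ corresponds to $(1,0)$. Therefore $K\cong\Gamma(\mathbb Z\lex G^\lambda_I,(1,0))$, and in particular $K$ is a perfect PEA.

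Uniqueness is inherited from Theorem \ref{th:2.1}: if $K\cong\Gamma(\mathbb Z\lex G',(1,0))$ for another directed po-group $G'$ with \RDP$_1$, then $(\mathbb Z\lex G',(1,0))\cong(\mathcal G,u)$ as unital po-groups, and any such isomorphism carries $\Ker\tilde s$ onto $\Ker\tilde s$, forcing $G'\cong G^\lambda_I$. I expect the main obstacle to be the step identifying $(G^\lambda_I)^+$ with $K_0$: one must show that a positive element of $\mathcal G$ annihilated by $\tilde s$ is a finite sum of infinitesimals lying in $K_0$, which rests on $u$ being a strong unit, on \RDP$_1$ to carry out the decomposition, and on perfectness condition (c) to keep the partial sums inside $K_0=[0,u]\cap\Ker\tilde s$. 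The centrality of $u$ forced by the symmetry $\lambda=\rho$ is exactly what upgrades the split extension $0\to G^\lambda_I\to\mathcal G\to\mathbb Z\to 0$ to an honest lexicographic product $\mathbb Z\lex G^\lambda_I$ rather than a twisted one.
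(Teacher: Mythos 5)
Your first half is correct and in fact more careful than the paper's own proof: you verify perfectness of $K:=K^{\lambda,\lambda}_I(E)$ from the negation formulas and the addition rules, using totality of $+$ exactly where it is needed (condition (c)), and you get \RDP$_1$ from Theorem~\ref{th:3.5}(2), just as the paper does. Where you diverge is the second half: the paper, after recording that $K$ is symmetric, perfect and has \RDP$_1$, simply cites \cite[Thm 5.2]{DvuK} as a black box, whereas you reconstruct the lexicographic representation inside the representing group $(\mathcal G,u)$ of Theorem~\ref{th:2.1}. That route is legitimate and mostly sound: the extension of the unique state to $\tilde s\colon\mathcal G\to\mathbb Z$, the identification $(\Ker\tilde s)\cap\mathcal G^{+}=K_0$, and the deduction of centrality of $u$ from symmetry (since $a^{-}=a^{\sim}$ for $a\in[0,u]$ means $u-a=-a+u$, i.e.\ $a+u=u+a$, and $[0,u]$ generates $\mathcal G$) are all correct.

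There is, however, a genuine gap: you assert twice, without proof, that $G^{\lambda}_I=\Ker\tilde s$ is directed, in the strong form ``each $g\in G^{\lambda}_I$ is a difference of two elements of $K_0$.'' This is not a formality, and it is precisely the crux of the lexicographic structure: it is what yields $u+h\ge 0$ for every $h\in\Ker\tilde s$, i.e.\ the inclusion of the lex cone $\{(n,h)\colon n\ge 1\}$ into $\mathcal G^{+}$; your positive-cone identification only gives the opposite inclusion. Kernels of states on directed unital po-groups need not be directed, even under \RDP: for $\mathbb Z\times\mathbb Z$ with the product order, $u=(1,1)$ and $s(m,n)=(m+n)/2$, the kernel is $\{(m,-m)\colon m\in\mathbb Z\}$, whose positive cone is $\{0\}$. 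So the directedness here must be extracted from perfectness, and that requires an argument, e.g.: (i) show $\Ker\tilde s=\langle K_0\rangle$, the subgroup generated by $K_0$ --- decompose any $p\in\mathcal G^{+}$ as $p_1+\cdots+p_m$ with $p_i\in[0,u]$ (strong unit plus \RDP), rewrite each $p_i\in K_1$ as $u-p_i^{-}$ with $p_i^{-}\in K_0$, and use centrality of $u$ to collect the $u$'s, getting $p=\tilde s(p)u+w$ with $w\in\langle K_0\rangle$; (ii) show $\langle K_0\rangle=K_0-K_0$ by induction on word length, using that $K_0$ is closed under $+$ (conditions (b) and (c)) and upward directed (here $a,b\le a+b$ since $+$ is total), so that $-b+c=(-b+d)-(-c+d)$ for any common upper bound $d\in K_0$ of $b,c$. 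A smaller item of the same kind: your uniqueness argument needs to know that $\mathbb Z\lex G'$ has \RDP$_1$ whenever the directed po-group $G'$ does, before the uniqueness in Theorem~\ref{th:2.1} can be applied to $(\mathbb Z\lex G',(1,0))$; this is true, but it too is part of what the citation of \cite[Thm 5.2]{DvuK} packages away in the paper's proof.
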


\begin{proof}
It is evident that the kite pseudo effect algebra $K^{\lambda,\lambda}_I(E)$ is $\lambda,\lambda$-weakly commutative, and in addition, by negations presented in Theorem \ref{th:3.1}, it is symmetric. It is evident that any kite pseudo effect algebra is perfect.
Since $E$ is with RDP$_1,$ by Theorem \ref{th:3.5}, the kite $K^{\lambda,\lambda}_I(E)$ satisfies RDP$_1.$ Consequently, all conditions of \cite[Thm 5.2]{DvuK} are satisfied, thus there is a unique (up to isomorphism) directed po-group $G^\lambda_I$ with RDP$_1$ such that
$K^{\lambda,\lambda}_I(E)\cong \Gamma(\mathbb Z\lex G^\lambda_I,(1,0)).$
\end{proof}

\begin{remark}\label{re:3.8}
{\rm (1) If the operation $+$ is not total in a GPEA $E$, then Theorem \ref{th:3.7} is not valid. For example if $E=[0,1)$ is the interval or real numbers, then $K^{\lambda,\lambda}_I(E)$ is not a perfect kite pseudo effect algebra.

(2) If the operation $+$ is total in a GPEA $E$, then by \cite[Prop X.1]{Fuc}, $E$ is the positive cone of a po-group.}
\end{remark}

\section{Subdirect Irreducibility of Kite Pseudo Effect Algebras}

In this section we generalize the results from \cite{DvuK,DvHo} concerning the subdirect irreducibility of the kite pseudo effect algebras.  We note that in \cite{DvuK,DvHo} these results were established for the special case of a GPEA $E=G^+,$ which is always $\lambda,\rho$-weakly commutative, the present proofs follow the basic steps and ideas of the original proofs from \cite{DvuK,DvHo}. To be self-contained, our proofs are given with fullness with necessary changes.

We remind that by an {\it o-ideal} of a directed po-group $G$ we understand any normal directed convex subgroup $H$ of $G$. If $G$ is a po-group, so is $G/H,$ where $x/H \le y/H$ iff $x\le h_1+y$ for some $h_1\in H$ iff $x \le y+h_2$ for some $h_2 \in H.$
If $G$ satisfies one of RDP's, then $G/H$ satisfies the same RDP, \cite[Prop 6.1]{174}.

We say that an equivalence $\sim$ on  a GPEA $E$ is a {\it congruence}, if for $a_1,a_2,b_1,b_2$ such that $a_1\sim a_2$ and $b_1 \sim b_2$ the existence $a_1+b_1$ and $a_2+b_2$ in $E$ implies $a_1+b_1 \sim a_2+b_2.$  If $I$ is a normal ideal of $E,$ then the relation $\sim_I$ defined on $E$ by
$a\sim_I b$ iff there are $e,f \in I$ such that $a\minusli e=b\minusli f.$  Then $\sim_I$ is a congruence on $E$. In the same way as \cite[Prop 4.1]{174}, we can show that if the GPEA $E$ satisfies RDP$_1,$ then there is a one-to-one correspondence between congruences on $E$ and normal ideals of $E.$ We note that if $\sim$ is a congruence, then $I=\{a \in E\colon a\sim 0\}$ is a normal ideal of $E$.

If a directed  GPEA $E$ satisfies RDP$_1$, by Theorem \ref{th:2.1}, there is a unique (up to isomorphism) couple $(G,G_0),$ where $G$ is a directed po-group with RDP$_1,$ $G_0$ is a GPEA which is a subalgebra of $(G^+;+,0)$ and $G_0$ generates the po-group $G$, such that $E \cong \Gamma(G,G_0).$ In the same way as \cite[Thm 4.2]{185}, we can prove the following statement:

\begin{proposition}\label{pr:4.1}
Let $E=\Gamma(G,G_0)$ for some directed po-group $G$ with \RDP$_1$. If $I$ is an ideal of $E$,
then the set
$$\phi(I)=\{x \in G: \exists \ x_i,y_j \in I,\ x= x_1+\cdots +x_n-y_1-\cdots -y_m\}$$
is a convex subgroup of $G$ generated by $I.$ The set $\phi(I)$ is an o-ideal if and only if $I$ is a normal ideal, and $E/I =\Gamma(G/\phi(I),u/\phi(I)).$ The mapping $I\mapsto \phi(I)$ is a one-to-one correspondence  between normal ideals of $E$ and o-ideals of $G$ preserving set-theoretical inclusion. If $K$ is an o-ideal, then the restriction $K\cap G_0$ is a normal ideal of $E.$
\end{proposition}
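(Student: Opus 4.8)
The plan is to build the bijection in stages, extracting from $I$ its group-theoretic trace and checking that the two constructions are mutually inverse. First I would verify the structural claims about $\phi(I)$. That $\phi(I)$ is a subgroup containing $I$, and indeed the subgroup generated by $I$, is immediate from its definition (closure under the group operation and under negation, by concatenating and reversing the defining words). The substance is convexity, and for this I would first prove the positive-cone description $\phi(I)\cap G^+=\{g\in G^+:\ g\le x_1+\cdots+x_n \text{ in } G \text{ for some } x_1,\dots,x_n\in I\}$. The inclusion $\subseteq$ is clear because subtracting the positive elements $y_1,\dots,y_m$ only lowers $x_1+\cdots+x_n$. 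For $\supseteq$, given $0\le g\le x_1+\cdots+x_n$ I would apply \RDP\ (which \RDP$_1$ implies) repeatedly to refine $g=g_1+\cdots+g_n$ with $0\le g_i\le x_i$; convexity of $G_0$ forces each $g_i\in G_0$, and downward closure of the ideal $I$ gives $g_i\in I$, whence $g\in\phi(I)$. Convexity of $\phi(I)$ is then automatic: if $0\le c\le b\in\phi(I)\cap G^+$ then $b\le\sum x_i$ gives $c\le\sum x_i$, so $c\in\phi(I)$. Directedness follows since every element of $\phi(I)$ is a difference of two elements of $\phi(I)\cap G^+$, and $p_1+p_2$ dominates both summands.

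Next I would establish the two recovery identities that make $\phi$ and $K\mapsto K\cap G_0$ mutually inverse and inclusion-preserving. For $\phi(I)\cap G_0=I$ the inclusion $I\subseteq\phi(I)\cap G_0$ is trivial, while for the reverse I take $g\in\phi(I)\cap G_0$, use the positive-cone description to get $g\le\sum x_i$ with $x_i\in I$, refine $g=g_1+\cdots+g_n$ as above with $g_i\in I$, and note that each partial sum lies in $G_0$ by convexity, so $g=g_1+\cdots+g_n$ is a sum defined in $E$ and lands in $I$ by the sum-closure of the ideal. For an o-ideal $K$ I would dually show $\phi(K\cap G_0)=K$: the inclusion $\subseteq$ holds since $K$ is a convex subgroup containing $K\cap G_0$, and for $\supseteq$ any $k\in K^+$ satisfies $k\le a_1+\cdots+a_m$ with $a_i\in G_0$ (as $G_0$ generates $G$), so refining $k=k_1+\cdots+k_m$ with $0\le k_i\le a_i$ puts each $k_i$ in $G_0$ by convexity of $G_0$ and in $K$ by convexity of $K$, giving $k\in\phi(K\cap G_0)$ and hence $K=K^+-K^+\subseteq\phi(K\cap G_0)$. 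In the same breath I would dispatch the easy half of the normality correspondence — the last assertion of the statement — by checking that $K\cap G_0$ is a normal ideal: it is an ideal by convexity and subgroup-closure, and for $x\in G_0$, $y\in K\cap G_0$ with $x+y\in G_0$ the conjugate $y'=x+y-x$ lies in $K$ by normality of $K$, is positive, and satisfies $0\le y'\le y'+x=x+y\in G_0$, so $y'\in K\cap G_0$ and $x+y=y'+x\in(K\cap G_0)+x$.

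The remaining and hardest point is that a normal ideal $I$ of $E$ produces a normal subgroup $\phi(I)$ — the passage from normality for the partial operation to normality for the total group operation, where the naive conjugate $c+x-c$ with $c\in G_0$, $x\in I$ cannot be computed inside $E$ once $c+x$ leaves $G_0$. I would avoid this by routing through the quotient. Since $I$ is normal, $\sim_I$ is a congruence (as recorded before the statement), so $E/I$ is a directed GPEA, and by the \RDP$_1$-preservation of this quotient — established exactly as in the cited \cite[Prop 4.1]{174} — it again satisfies \RDP$_1$. Theorem \ref{th:2.2} then gives $E/I\cong\Gamma(\tilde G,\tilde G_0)$, and, $\Gamma$ being a categorical equivalence, the quotient homomorphism $q:E\to E/I$ lifts to a po-group homomorphism $\hat q:G\to\tilde G$ with $\hat q(G_0)\subseteq\tilde G_0$, $\hat q|_{G_0}=q$, and image generating $\tilde G$ so that $\hat q$ is onto. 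Its kernel $\ker\hat q$ is a normal directed convex subgroup, i.e. an o-ideal, and $\ker\hat q\cap G_0=\{x\in G_0:q(x)=0\}=I$; applying the recovery identity $\phi(K\cap G_0)=K$ to $K=\ker\hat q$ yields $\phi(I)=\ker\hat q$, which is therefore an o-ideal and in particular normal.

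Finally I would identify the quotient and package the bijection. The first isomorphism theorem for po-groups identifies $\tilde G$ with $G/\phi(I)$, whence $E/I=\Gamma(G/\phi(I),\cdot)$ as claimed, the second coordinate being the image of $G_0$; note that $G/\phi(I)$ carries \RDP$_1$ by the quotient fact recalled just above the statement. Combining the two recovery identities $\phi(I)\cap G_0=I$ and $\phi(K\cap G_0)=K$ shows that $\phi$ and $K\mapsto K\cap G_0$ are mutually inverse, and since both visibly preserve set-theoretic inclusion, $I\mapsto\phi(I)$ is the announced inclusion-preserving bijection between normal ideals of $E$ and o-ideals of $G$. I expect the normality step — specifically securing \RDP$_1$ for $E/I$ so that Theorem \ref{th:2.2} and the functoriality of $\Gamma$ can be invoked — to be the main obstacle; the alternative of decomposing $x\in I$ into pieces small enough that each $c+x_i$ stays in $G_0$ is delicate in the non-commutative case and is precisely what the quotient detour circumvents.
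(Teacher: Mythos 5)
Much of your outline is sound, and the parts you work out in detail (the positive-cone description of $\phi(I)$, the recovery identities $\phi(I)\cap G_0=I$ and $\phi(K\cap G_0)=K$, and the check that $K\cap G_0$ is a normal ideal of $E$) are correct; note also that the paper itself gives no argument but refers to a direct group-theoretic proof ([Dvu3, Thm 4.2]), so your detour through Theorem \ref{th:2.2} is in any case a different route. However, two steps fail as written, and both sit exactly at the non-commutative pressure points. The first: the subgroup property of $\phi(I)$ is \emph{not} ``immediate by concatenating and reversing the defining words.'' Concatenating two words of the form (sum of elements of $I$) minus (sum of elements of $I$) is not of that form in a non-commutative group; you must commute a negative block past a positive block. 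This is precisely where the refinement property enters: writing $D(I)$ for the set of finite sums (in $G$) of elements of $I$, one has, for $a,b\in D(I)$, the identity $-b+a=a-(-a+b+a)$ with $0\le -a+b+a\le b+a\in D(I)$, so the conjugate $-a+b+a$ lies in $D(I)$ by the very convexity of $D(I)$ that your positive-cone description establishes. Thus the logical order must be: refinement and convexity of $D(I)$ first, then closure of $\phi(I)=D(I)-D(I)$ under addition, and only then convexity of $\phi(I)$ (whose proof translates by elements of $\phi(I)$ and hence already uses closure). As written, closure is asserted, not proved.

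The second gap is in the crucial direction (normal ideal $I$ gives normal $\phi(I)$): you declare that $\ker\hat q$ is a ``normal directed convex subgroup, i.e.\ an o-ideal,'' and then apply $\phi(K\cap G_0)=K$ to $K=\ker\hat q$. Normality and convexity of the kernel are indeed automatic, but directedness is not: kernels of po-group homomorphisms can fail to be directed (for instance $(a,b)\mapsto a+b$ from $\mathbb Z^2$ with the product order onto $\mathbb Z$ has kernel $\{(a,-a)\colon a\in\mathbb Z\}$, whose only positive element is $0$). Moreover, your recovery identity $\phi(K\cap G_0)=K$ was proved exactly for o-ideals --- its proof uses $K=K^+-K^+$ --- so invoking it for $\ker\hat q$ presupposes the directedness you have not shown; the identification $\phi(I)=\ker\hat q$, on which the whole normality claim rests, is therefore unproven. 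The step can be repaired without it: refinement plus the fact that $G_0$ generates $G$ give $G^+=D(E)$, so any $x\in\ker\hat q\cap G^+$ is a sum $x_1+\cdots+x_n$ with $x_i\in G_0$ and $q(x_1)+\cdots+q(x_n)=0$; since each $q(x_i)\ge 0$, every $q(x_i)=0$, i.e.\ $x_i\in I$, whence $\ker\hat q\cap G^+=D(I)$. Now conjugation by any $g\in G$ preserves positivity and preserves $\ker\hat q$, so it maps $D(I)$ into $\ker\hat q\cap G^+=D(I)$, and normality of $\phi(I)=D(I)-D(I)$ follows at once --- no identification of the full kernels, and no directedness of $\ker\hat q$, is needed.
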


\begin{remark}\label{re:2}
{\rm We note that in the same way as \cite[Lem 3.4]{DvHo}, we san prove that of a GPEA $E$ satisfies RDP$_1$ and $I$ is a normal ideal of $E,$ then $E/I$ is a GPEA with RDP$_1$.}
\end{remark}

We say that a GPEA $E$ (or a po-group) is a {\it subdirect product} of a family $(E_t\colon t \in T)$ of GPEAs (po-groups), and we write $E \leq  \prod_{t \in T}E_t$ if there is an injective homomorphism $h\colon E \to \prod_{t \in T}E_t$ such that $\pi_t\circ h(E)=E_t$ for all $t \in T,$ where $\pi_t$ is the $t$-th projection from $\prod_{t \in T}E_t$ onto $E_t.$ In addition,  $E$ is
{\it subdirectly irreducible} if whenever $E$ is a subdirect product of $(E_t: t \in T),$ there exists $t_0 \in T$ such that $\pi_{t_0}\circ h$ is an isomorphism of generalized pseudo effect algebras. It is possible to show that a GPEA $E$ is subdirectly irreducible iff $E$ is either trivial or it possesses the least non-trivial normal  ideal $I$, or equivalently, the intersection of all non-trivial normal ideals of $E$ is a non-trivial normal ideal.

The following result generalizes an analogous one from \cite[Lem 3.2]{DvuK}, where it was proved for PEAs with RDP$_1$; the present proof follows the original proof from \cite[Lem 3.2]{DvuK}.

\begin{lemma}\label{le:7.1}
Every directed generalized pseudo effect algebra with \RDP$_1$ is a subdirect product of subdirectly irreducible directed generalized pseudo effect algebras with \RDP$_1.$
\end{lemma}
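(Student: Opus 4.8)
The plan is to transport Birkhoff's subdirect representation argument into the language of normal ideals, using the correspondence between normal ideals and congruences that holds for GPEAs with \RDP$_1$ (recorded just before the lemma), the fact from Remark \ref{re:2} that quotients by normal ideals preserve \RDP$_1$, and the characterization of subdirect irreducibility via the existence of a least non-trivial normal ideal. If $E=\{0\}$ the claim is trivial, so assume $E$ is non-trivial and take the index set $T=E\setminus\{0\}$.

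First I would, for each $a\in T$, consider the family $\mathcal F_a$ of all normal ideals of $E$ not containing $a$. It is non-empty since $\{0\}\in\mathcal F_a$, and the union of a chain of normal ideals is again a normal ideal (conditions (i), (ii) of an ideal and the normality identity $x+J=J+x$ all pass to chain unions), a union that still avoids $a$ whenever each member does. Zorn's Lemma then furnishes a normal ideal $I_a$ that is maximal with respect to not containing $a$. Next I would check that each quotient $E/I_a$ is a subdirectly irreducible directed GPEA with \RDP$_1$: it has \RDP$_1$ by Remark \ref{re:2}, and it is directed because the image of an upper bound of $x,y$ is an upper bound of $\bar x=x/I_a$ and $\bar y=y/I_a$. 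For irreducibility I would invoke the standard inclusion-preserving correspondence under which the normal ideals of $E/I_a$ are exactly the sets $J/I_a$ with $J\supseteq I_a$ a normal ideal of $E$; a non-trivial one has $J\supsetneq I_a$, so maximality of $I_a$ forces $a\in J$ and hence $\bar a\in J/I_a$. Since $a\notin I_a$, the class $\bar a$ is non-zero and lies in every non-trivial normal ideal of $E/I_a$, so the intersection of all such ideals contains $\bar a$ and is non-trivial; by the characterization recalled above, $E/I_a$ is subdirectly irreducible.

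Then I would assemble the map $h\colon E\to\prod_{a\in T}E/I_a$, $x\mapsto\langle x/I_a\colon a\in T\rangle$. It is a homomorphism because each component is the quotient homomorphism $q_a\colon E\to E/I_a$ (so defined sums are preserved componentwise), and $\pi_a\circ h=q_a$ is onto, giving $\pi_a\circ h(E)=E/I_a$. For injectivity I would pass to congruences: the meet $\bigcap_{a\in T}\sim_{I_a}$ is again a congruence, and its associated normal ideal is $\{x:x\sim_{I_a}0\ \forall a\}=\{x:x\in I_a\ \forall a\}=\bigcap_{a\in T}I_a$, using that $x\sim_I 0$ iff $x\in I$ (from $0\minusli f$ being defined only for $f=0$). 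Since $z\notin I_z$ for every $z\in T$, this intersection is $\{0\}$, and by injectivity of the congruence/normal-ideal correspondence the meet must equal $\sim_{\{0\}}$, which is plain equality by cancellation (GP3). Hence any two distinct elements are separated by some $\sim_{I_a}$, so $h$ is injective, exhibiting $E$ as a subdirect product of the subdirectly irreducible directed GPEAs $E/I_a$ with \RDP$_1$.

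The main obstacle I anticipate is the subdirect-irreducibility step, where I must be confident that the correspondence ``normal ideals of $E/I_a$ $\leftrightarrow$ normal ideals of $E$ above $I_a$'' genuinely holds for GPEAs with \RDP$_1$ and is inclusion-preserving, and that maximality of $I_a$ is precisely what pushes the non-zero class $\bar a$ into every non-trivial normal ideal of the quotient. By contrast, the Zorn argument and the injectivity computation are routine once the normal-ideal/congruence dictionary already set up before the lemma is in hand.
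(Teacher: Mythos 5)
Your proof is correct, but it follows a genuinely different route than the paper's. You run Birkhoff's argument intrinsically inside the GPEA: Zorn's Lemma is applied to the normal ideals of $E$ itself (one maximal ideal $I_a$ avoiding each non-zero $a\in E$), subdirect irreducibility of each quotient $E/I_a$ is extracted from the correspondence between normal ideals of $E/I_a$ and normal ideals of $E$ above $I_a$, and injectivity of the diagonal map comes from the congruence/normal-ideal dictionary. The paper instead immediately invokes the representation theorem (Theorem \ref{th:2.2}) to write $E\cong\Gamma(G,G_0)$ for a directed po-group $G$ with \RDP$_1$, applies Zorn's Lemma to the o-ideals of $G$ (one $N_g$ for each non-zero $g\in G$), obtains $\bigcap_{g\neq 0}N_g=\{0\}$, and then pulls everything back to $E$ via Proposition \ref{pr:4.1}: the normal ideals of $E$ are the traces $N_g\cap G_0$, the quotients $E/(N_g\cap G_0)$ inherit subdirect irreducibility from the po-group quotients $G/N_g$, and \RDP$_1$ passes to quotients. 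The trade-off is exactly the one you anticipate: the paper's detour through $G$ means that all the lattice-of-ideals and quotient machinery it needs (convex normal subgroups, the classical correspondence theorem for group quotients) is standard and requires no partial-algebra care, at the price of invoking the heavy categorical equivalence; your route avoids Theorem \ref{th:2.2} entirely and is more self-contained, but it leans on the quotient-ideal correspondence for GPEAs, which the paper never states and which, for partial algebras, does require checking (well-definedness of $+$ on $E/I_a$, that every element $\sim_{I_a}$-equivalent to an element of $J\supseteq I_a$ lies in $J$, and that normality transfers both ways along the quotient map). That verification does go through using the tools the paper provides (the congruence $\sim_I$, cancellation (GP3), and Remark \ref{re:2}), so your argument is complete once that lemma is written out; it is in fact the more portable proof, since it would apply to any class of GPEAs in which the normal-ideal/congruence dictionary and quotient stability of \RDP$_1$ hold, without needing a group representation.
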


\begin{proof}
If $E$ is a trivial GPEA, i.e. $E=\{0\}$, $E$ is subdirectly irreducible. Thus, suppose $E$ is not trivial. By Theorem \ref{th:2.2}, RDP$_1$ entails that $E \cong \Gamma(G,G_0)$ for some po-group $G$ satisfying RDP$_1$, and $G_0$ is a GPEA which is a subalgebra of $G^+$ generating $G$; for simplicity, we assume $E \subseteq \Gamma(G,G_0).$ Given a non-zero element $g\in G,$ let $N_g$ be an o-ideal of $G$ which is maximal with respect to not containing $g$; by Zorn's Lemma, it exists. Then $\bigcap_{g\ne 0} N_g=\{0\}$ and $N_g\cap G_0$ is a normal ideal of $E.$  Therefore, $G \leq \prod_{g\ne 0}G/N_g$ and $E \le \prod_{g\ne 0}E/(N_g\cap G_0).$ In addition, for every $g\ne 0,$ the o-ideal of $G/N_g$ generated by $g/N_g$ is the least non-trivial o-ideal of $G/N_g$ which proves that every $G/N_g$ is subdirectly irreducible. Therefore, every $E/(N_g\cap G_0)$ is a subdirectly irreducible pseudo effect algebra, and by \cite[Prop 4.1]{185}, the quotient pseudo effect algebra $E/(N_g\cap G_0)$ also satisfies RDP$_1.$
\end{proof}

Let $\alpha$ be a cardinal. An element $\langle f_j\colon j \in I\rangle$ is said to be $\alpha$-{\it dimensional},  if $|\{j \in I: f_j \ne e\}|=\alpha.$ In the same  way we define an $\alpha$-dimensional element $\langle \bar a_i\colon i \in I\rangle.$

\begin{proposition}\label{pr:6.3}
Let $I$ be a set and $\lambda,\rho:I \to I$ be bijections.
If $H$ is a normal ideal of a directed $\lambda,\rho$-weakly commutative GPEA $E$,  then $H^I:=\{\langle f_j\colon j\in I\rangle: f_j \in H,\ j\in I\}$ is a normal ideal of the kite pseudo effect algebra
$K^{\lambda,\rho}_I(E).$ In addition, if $H_f^I$ denotes the set of all finite dimensional elements from $H^I,$ then $H_f^I$ is a non-trivial normal ideal of the kite pseudo effect algebra.  In particular, if $H=E,$ then $E^I$ is a maximal ideal of $K^{\lambda,\rho}_I(E)$.

Conversely, if $J$ is a normal ideal of $K^{\lambda,\rho}_I(E),$ $J \subseteq H^I$, then $\pi_j(J)$ is a    normal ideal of $E$ which is a subset of  $H,$ where
$\pi_j$ is the $j$-th projection of $\langle f_j \colon j \in I\rangle \mapsto f_j.$
\end{proposition}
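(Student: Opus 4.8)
The plan is to prove the several assertions of Proposition \ref{pr:6.3} in turn, relying on the description of the partial operation $+$ and the negations given in Theorem \ref{th:3.1}. First I would verify that $H^I$ is an ideal. Closure under $+$ and downward closure both reduce, coordinatewise, to the corresponding properties of $H$ as an ideal of $E$: if two elements of $E^I$ with all coordinates in $H$ are added via $(IV)$, the sums $f_j+g_j$ lie in $H$ since $H$ is an ideal; and the order on $E^I$ is coordinatewise, so $\langle f_j\rangle \le \langle g_j\rangle \in H^I$ forces each $f_j \in H$. The key point to check is that $H^I$ lives entirely in the ``down'' part $E^I$, hence $H^I$ never meets $(\overline E)^I$; this is why no element of $(\overline E)^I$ can belong to $H^I$, and it is the reason $E^I$ will turn out to be \emph{maximal} rather than improper.

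Next I would establish normality, i.e. $x+H^I = H^I+x$ for every $x \in K^{\lambda,\rho}_I(E)$. When $x = \langle g_j\rangle \in E^I$, formula $(IV)$ reduces this to showing $g_j+H = H+g_j$ inside $E$ for each $j$, which is exactly the normality of $H$ in the GPEA $E$. When $x = \langle \bar a_i\rangle \in (\overline E)^I$, I would use formulas $(II)$ and $(III)$: adding an element of $H^I$ on the right sends $\langle \bar a_i\rangle$ to $\langle \overline{f_{\rho^{-1}(i)}\minusre a_i}\rangle$, and on the left via $(III)$ to $\langle \overline{a_i\minusli f_{\lambda^{-1}(i)}}\rangle$. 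Here the $\lambda,\rho$-weak commutativity of $E$ and the normality of $H$ together let me match the two sets; since $\lambda,\rho$ are bijections, ranging $f_{\rho^{-1}(i)}$ over $H$ is the same as ranging $f_{\lambda^{-1}(i)}$ over $H$, and normality of $H$ converts the left cancellation into the right one coordinatewise. The statement for $H_f^I$ follows the same computations, with the extra observation that adding finitely supported elements keeps the support finite and that the order-closure of $H_f^I$ stays within finitely supported sequences; non-triviality of $H_f^I$ comes from picking a single nonzero coordinate in $H$, which is possible because $E$ is non-trivial wherever $H\neq\{0\}$, and in particular $H_f^I \neq \{0\}$.

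For the maximality claim when $H=E$, I would argue that any ideal $J$ strictly containing $E^I$ must contain some element $\langle \bar a_i\rangle \in (\overline E)^I$; but then, using formula $(II)$ and downward closure, $J$ absorbs the complements and hence all of $(\overline E)^I$ together with $E^I$, forcing $J = K^{\lambda,\rho}_I(E)$. Concretely, from $\langle \bar a_i\rangle \in J$ and the fact that $1 = \bar 0^I \le \langle \bar a_i\rangle$ fails in general, I would instead use that the top element $1$ itself lies below no proper ideal, and show directly that $E^I$ is contained in no ideal other than itself and the whole algebra by exhibiting that the quotient $K^{\lambda,\rho}_I(E)/E^I$ is the two-element Boolean algebra.

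Finally, for the converse I would take a normal ideal $J \subseteq H^I$ and show each projection $\pi_j(J)$ is a normal ideal of $E$ contained in $H$. Containment in $H$ is immediate from $J \subseteq H^I$. Closure of $\pi_j(J)$ under $+$ and under $\le$ in $E$ should be transported back along the coordinatewise structure of $(IV)$: if $f_j, g_j \in \pi_j(J)$ are coordinates of elements of $J$, I would combine those witnesses inside $J$ (using that $J$ is an ideal and that $E$ is directed, so the witnesses can be aligned) to produce a witness whose $j$-th coordinate is $f_j+g_j$. The main obstacle I anticipate is precisely this transport step: an element of $\pi_j(J)$ only records one coordinate, so to show $\pi_j(J)$ is closed I must lift back to genuine elements of $J$ and control the other coordinates, and to get \emph{normality} of $\pi_j(J)$ I must convert the coordinatewise action of the kite's $+$ (which mixes coordinates through $\lambda^{-1}$ and $\rho^{-1}$ in the $(\overline E)^I$ part) into a statement about $E$ alone. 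Because $J \subseteq H^I \subseteq E^I$ stays in the down part, however, only formula $(IV)$ is in play, the coordinate mixing disappears, and normality of $J$ descends coordinatewise to normality of each $\pi_j(J)$, which resolves the difficulty.
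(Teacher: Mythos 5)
Most of your proposal --- the coordinatewise verification that $H^I$ and $H_f^I$ are normal ideals, and the converse about the projections $\pi_j(J)$ --- is sound in outline, and in fact more detailed than the paper itself, which simply declares these parts identical to the proof of \cite[Prop 6.3]{DvuK} (the case $E=G^+$) and proves explicitly only the maximality claim. But it is precisely on maximality that your proposal has a genuine gap. Your first instinct, that an ideal $J\supsetneq E^I$ containing some $x=\langle \bar a_i\colon i\in I\rangle$ must absorb everything, is the right one; you then explicitly retract it (worrying, irrelevantly, that $1\le\langle\bar a_i\rangle$ fails --- no order comparison is needed) and replace it by the claim that the quotient $K^{\lambda,\rho}_I(E)/E^I$ is the two-element Boolean algebra. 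That replacement does not prove maximality: maximality is a statement about the lattice of ideals of the kite, and to transfer it through a quotient you need a correspondence between ideals of the kite containing $E^I$ and ideals of the quotient. No such correspondence is available here --- the paper establishes the ideal--congruence dictionary only for GPEAs with \RDP$_1$, while Proposition~\ref{pr:6.3} carries no RDP hypothesis, and an arbitrary ideal $J\supsetneq E^I$ need not be saturated with respect to the congruence $\sim_{E^I}$, so its image in the quotient carries no usable information.

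The step you abandoned is in fact the whole proof, and it needs only closure of ideals under defined sums: if $x=\langle\bar a_i\colon i\in I\rangle\in J$, then by Theorem~\ref{th:3.1} its complement $x^-=\langle a_{\lambda(j)}\colon j\in I\rangle$ lies in $E^I\subseteq J$; since $x^-+x=1$ is defined and $J$ is closed under defined $+$, we get $1\in J$, and downward closure then gives $J=K^{\lambda,\rho}_I(E)$. This two-line argument is exactly what the paper does. A smaller remark on your converse: aligning witnesses by directedness of $E$ is not what makes the projection argument work (directedness produces upper bounds, not definedness of sums); what you need is downward closure of $J$, which lets you replace a witness $f\in J$ with $f_j=u$ by the one-dimensional element equal to $u$ at $j$ and $0$ elsewhere, after which closure under $+$ and normality descend coordinatewise as you say.
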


\begin{proof}
It is identical with the proof of \cite[Prop 6.3]{DvuK}, where it was proved for the special case $E=G^+$ for some directed po-group $G.$ We prove only that $E^I$ is a maximal ideal of $K^{\lambda,\rho}_I(E).$ Take $x=\langle \bar f_j\colon j \in I\rangle$ and let $F$ be an ideal of the kite generated by $A^I$ and $x$. By Theorem \ref{th:3.1}, $x^-\in A^I$, so that $x^-+x =1$ therefore, $1\in F$ which proves that $A^I$ is maximal.
\end{proof}

\begin{proposition}\label{pr:6.5}
Let a kite pseudo effect algebra of a non-trivial directed $\lambda,\rho$-weakly commutative GPEA $E,$ $K^{\lambda,\rho}_I(E),$ have the least non-trivial normal ideal. Then $E$ has the least non-trivial normal ideal.
\end{proposition}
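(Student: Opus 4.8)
The plan is to transfer a least non-trivial normal ideal of the kite $K^{\lambda,\rho}_I(E)$ down to $E$ by applying a projection, and to verify that the resulting object is again the least non-trivial normal ideal. Let $J$ denote the least non-trivial normal ideal of the kite. The first step is to locate $J$ inside the "down" part $E^I$: by Proposition \ref{pr:6.3}, the set $E^I$ is a maximal ideal of $K^{\lambda,\rho}_I(E)$, and more importantly $H^I_f$ is a non-trivial normal ideal for every non-trivial normal ideal $H$ of $E$ (in particular for $H=E$). Since $E^I_f$ is a non-trivial normal ideal, minimality of $J$ forces $J \subseteq E^I_f \subseteq E^I$. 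Thus every element of $J$ is a finite-dimensional sequence $\langle f_j\colon j\in I\rangle$ lying in the lower copy.

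Next I would produce a candidate least non-trivial normal ideal of $E$. Having established $J\subseteq E^I$, I invoke the converse part of Proposition \ref{pr:6.3}: for each $j\in I$ the projection $\pi_j(J)=\{f_j\colon \langle f_k\colon k\in I\rangle \in J\}$ is a normal ideal of $E$. The natural candidate is $\pi_{j_0}(J)$ for a suitable fixed $j_0$; since $J$ is non-trivial and its members are finite-dimensional, some coordinate must be non-trivial, and I would argue that the permutation symmetry built into the kite (the bijections $\lambda,\rho$ act on $I$ and $J$ is a normal ideal, hence closed under the relevant index shifts induced by the negations in Theorem \ref{th:3.1}) makes $\pi_j(J)$ independent of $j$, call it $N$. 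Then $N$ is a non-trivial normal ideal of $E$.

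To finish, I must show $N$ is the \emph{least} non-trivial normal ideal of $E$. Let $H$ be any non-trivial normal ideal of $E$; I need $N\subseteq H$. The mechanism is to pull $H$ back into the kite and use minimality of $J$ there: by Proposition \ref{pr:6.3}, $H^I_f$ is a non-trivial normal ideal of $K^{\lambda,\rho}_I(E)$, so by minimality $J\subseteq H^I_f\subseteq H^I$. Applying $\pi_{j_0}$ and using that $\pi_{j_0}(H^I)\subseteq H$, I obtain $N=\pi_{j_0}(J)\subseteq H$. This shows $N$ is contained in every non-trivial normal ideal of $E$, and since $N$ itself is a non-trivial normal ideal, it is the least one, as required.

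The main obstacle I anticipate is the second step: justifying that $\pi_j(J)$ is non-trivial and independent of the coordinate $j$, and that it is genuinely a normal ideal rather than merely an ideal. The normality transfer is the delicate point, because normality in the kite involves the partial sums $(II)$--$(IV)$ of Theorem \ref{th:3.1}, which mix the $\lambda,\rho$ index shifts with the partial addition of $E$; I would have to check that the condition $x+N=N+x$ in $E$ is exactly what is extracted coordinatewise from $X+J=J+X$ in the kite for the embedded single-coordinate elements $X$, using the $\lambda,\rho$-weak commutativity hypothesis to guarantee that the relevant sums are defined. The coordinate-independence and non-triviality should then follow from the closure of $J$ under the index action implicit in the negation formulas, but making that closure precise is where the care is needed.
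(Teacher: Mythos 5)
Your argument is correct, but it takes a genuinely different route from the paper's. The paper proceeds by contradiction: if $E$ had no least non-trivial normal ideal, the intersection of the family $\{H_t\colon t\in T\}$ of all non-trivial normal ideals of $E$ would be $\{0\}$; by Proposition \ref{pr:6.3} each $H_t^I$ is a non-trivial normal ideal of the kite, so the kite's least non-trivial normal ideal $J$ lies in $\bigcap_{t}H_t^I$, and any nonzero element of this intersection has every coordinate in $\bigcap_t H_t=\{0\}$, a contradiction. That proof needs only the forward half of Proposition \ref{pr:6.3}, plus the trivial observation that elements of $H_t^I$ have all coordinates in $H_t$. Your proof is direct and uses, in addition, the converse (projection) half of Proposition \ref{pr:6.3}: minimality of $J$ gives $J\subseteq H^I_f\subseteq H^I$ for every non-trivial normal ideal $H$ of $E$ (in particular $J\subseteq E^I$), you set $N=\pi_{j_0}(J)$, and the converse part yields that $N$ is a normal ideal of $E$ contained in every such $H$. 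What your route buys is an explicit description of the least non-trivial normal ideal of $E$ as a projection of $J$, which the paper's contradiction argument does not provide; what it costs is reliance on the harder half of Proposition \ref{pr:6.3}. One simplification: the obstacle you flag, coordinate-independence of $\pi_j(J)$, is not needed anywhere. Since $J\ne\{0\}$, fix any $j_0$ at which some element of $J$ has a nonzero coordinate; then $N=\pi_{j_0}(J)$ is non-trivial, and for every non-trivial normal ideal $H$ the inclusion $J\subseteq H^I$ gives $N\subseteq\pi_{j_0}(H^I)=H$, irrespective of how $\pi_j(J)$ varies with $j$. Likewise, the ``normality transfer'' you worry about is precisely the statement of the converse part of Proposition \ref{pr:6.3}, so citing it suffices; no coordinatewise verification is required.
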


\begin{proof}
Suppose the converse, i.e. $E$ has no least non-trivial normal ideal. There exists a set $\{H_t: t \in T\}$ of non-trivial normal ideals of $E$ such that $\bigcap_{t \in T} H_t =\{0\}.$
By Proposition \ref{pr:6.3}, every $H_t^I$ is a normal ideal of the kite. Hence, $\bigcap_{t \in T} H_t \ne \{0\}$ and there is a non-zero element $f=\langle f_j\colon j \in I\rangle\in \bigcap_{t\in T} H_t^I.$ Then, for every index $j \in I,$ $f_j \in H_t$ for every $t \in T$ which yields $f_j=0$ for each $j \in I$ and $f=\langle f_j\colon j \in I\rangle = 0,$ which is a contradiction. Therefore, $E$ has the least non-trivial normal ideal.
\end{proof}

According to \cite{DvKo, DvuK, DvHo}, we say that elements
$i,j\in I$ are {\it connected} if there is an integer $m \ge 0$ such that $(\rho\circ\lambda^{-1})^m(i)= j$ or $(\lambda\circ \rho^{-1})^m(i)= j$; otherwise, $i$ and $j$ are said to be {\it disconnected}.

The relation $i$ and $j$ are connected is an equivalence on $I$. We call this equivalence class a {\it connected component} of $I.$ We denote by $\mathcal C(I)$ the set of all connected components of $I.$

It is noteworthy to recall that if $C$ is a connected component of $I,$ then $\lambda^{-1}(C)= \rho^{-1}(C).$  Indeed, let $i \in C$ and $k=\lambda^{-1}(i).$ Then $j=\rho (k)= \rho \circ \lambda^{-1}(i) \in C.$ Hence, $k=\rho^{-1}(j)$ which proves $\lambda^{-1}(C)\subseteq \rho^{-1}(C).$ In the same way we prove the opposite inclusion. In particular, we have $\lambda(\rho^{-1}(C))= \rho(\lambda^{-1}(C)) = C=\lambda(\lambda^{-1}(C))= \rho(\rho^{-1}(C)).$

\begin{theorem}\label{th:6.6}
Let $I$ be a set and $\lambda,\rho:I \to I$ be bijections and let $E$ be a non-trivial directed $\lambda,\rho$-weakly commutative GPEA. For the kite pseudo effect algebra  $K^{\lambda,\rho}_I(E)$ with \RDP$_1$, the following statements are equivalent:

\begin{enumerate}
\item[{\rm (1)}] $E$ has the least non-trivial normal ideal and for all $i,j\in I$ there exists an integer $m\ge 0$ such that $(\rho\circ\lambda^{-1})^m(i)=j$ or $(\lambda\circ \rho^{-1})^m(i)=j.$
\item[{\rm (2)}] $K^{\lambda,\rho}_I(E)$ has the least non-trivial normal ideal.
\end{enumerate}
\end{theorem}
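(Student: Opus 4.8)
The plan is to prove the two implications of this equivalence separately. For the direction $(2)\Rightarrow(1)$, the first half follows immediately from Proposition~\ref{pr:6.5}: if $K^{\lambda,\rho}_I(E)$ has the least non-trivial normal ideal, then so does $E$. For the combinatorial half of $(1)$ under hypothesis $(2)$, I would argue by contrapositive. If there exist $i,j\in I$ lying in distinct connected components, then $I$ splits as a disjoint union $I=C_1\uplus C_2$ of non-empty $\lambda,\rho$-invariant subsets (invariant in the sense that $\lambda^{-1}(C_k)=\rho^{-1}(C_k)=C_k$, using the observation recorded just before the theorem). This decomposition of the index set should induce a decomposition of the kite itself as a kind of product $K^{\lambda,\rho}_I(E)\cong K^{\lambda|_{C_1},\rho|_{C_1}}_{C_1}(E)\times K^{\lambda|_{C_2},\rho|_{C_2}}_{C_2}(E)$, and from such a non-trivial direct factorization I would extract two incomparable non-trivial normal ideals (one supported on each component), contradicting the existence of a least one. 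The key point is that the partial addition formulas $(II)$--$(IV)$ of Theorem~\ref{th:3.1} only ever couple coordinates through $\lambda^{-1}$ and $\rho^{-1}$, so invariance of the $C_k$ guarantees the operation respects the product.

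For the substantive direction $(1)\Rightarrow(2)$, I would first reduce to the single-component case. Assuming $(1)$, the whole of $I$ is a single connected component, so the combinatorial obstruction of the previous paragraph is absent. Let $H$ be the least non-trivial normal ideal of $E$, which exists by hypothesis. I expect the least non-trivial normal ideal of the kite to be $H_f^I$, the set of finite-dimensional sequences with entries in $H$, which is already known to be a non-trivial normal ideal by Proposition~\ref{pr:6.3}. So the real content is showing that $H_f^I$ is contained in \emph{every} non-trivial normal ideal $J$ of $K^{\lambda,\rho}_I(E)$.

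The strategy for that containment runs through the lower part $E^I$. Since $K^{\lambda,\rho}_I(E)$ satisfies RDP$_1$, I would use Theorem~\ref{th:2.1} and Proposition~\ref{pr:4.1} to pass between normal ideals of the kite and o-ideals of its representing po-group, and I would invoke the fact (Proposition~\ref{pr:6.3}) that $E^I$ is a maximal ideal together with the lower-part projections $\pi_j$. Given a non-trivial normal ideal $J$, the first step is to show $J$ must contain a non-zero element of $E^I$: any non-zero element of the upper part $(\overline E)^I$ generates, together with its negations and multiples, enough of the lower part — here one uses that the element is bounded below $1$ and that differences land in $E^I$. The second step is that for a single fixed coordinate $j_0$, connectedness lets me transport a non-zero entry from coordinate $j_0$ to every other coordinate by repeatedly applying the shift maps induced by $\rho\circ\lambda^{-1}$ (these shifts are realized inside the kite by the algebraic operations, essentially as in Proposition~\ref{pr:6.3}'s conjugation argument), so that $\pi_{j_0}(J)$ is forced to be a non-trivial normal ideal of $E$ and each coordinate can be populated. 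Minimality of $H$ in $E$ then gives $H\subseteq\pi_{j_0}(J)$, and connectedness propagates this to all coordinates, yielding $H_f^I\subseteq J$.

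The hard part will be the connectedness-driven transport of ideal membership across coordinates: one must verify that if a normal ideal $J$ contains a sequence non-zero at coordinate $i$, then it contains sequences non-zero at $(\rho\circ\lambda^{-1})(i)$ and at $(\lambda\circ\rho^{-1})(i)$, and that this is achieved purely by the kite operations (not merely set-theoretically). This is exactly where the $\lambda,\rho$-weak commutativity and the negation formulas of Theorem~\ref{th:3.1} must be combined with normality of $J$ to conjugate a witnessing element from one coordinate onto its neighbour; making this rigorous while only assuming the partial-operation structure (rather than $E=G^+$) is the delicate step, and it is where the present proof must depart from the $G^+$-specific argument in \cite{DvuK,DvHo}.
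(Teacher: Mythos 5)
Your overall architecture matches the paper's: (2)$\Rightarrow$(1) via Proposition~\ref{pr:6.5} plus a disconnectedness argument, and (1)$\Rightarrow$(2) by showing that $H_f^I$ is the least non-trivial normal ideal, with coordinate-transport along the connectedness relation. But both halves contain gaps, one minor and one fatal. In (2)$\Rightarrow$(1), the claimed factorization $K^{\lambda,\rho}_I(E)\cong K^{\lambda|_{C_1},\rho|_{C_1}}_{C_1}(E)\times K^{\lambda|_{C_2},\rho|_{C_2}}_{C_2}(E)$ is false: the kite's universe has only two sectors, $E^I$ and $(\overline E)^I$, while the product of the two component kites has four (lower on $C_1$ paired with upper on $C_2$, and so on); splitting over connected components yields only a subdirect embedding, which is exactly Lemma~\ref{le:7.4}. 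This is repairable, since the two ideals you want can be built directly as the paper does: take the elements of the lower part supported on $C_k$ (entries $0$ off $C_k$); normality follows from $\lambda^{-1}(C_k)=\rho^{-1}(C_k)$, and the two ideals meet in $\{0\}$.

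The serious gap is in (1)$\Rightarrow$(2), at precisely the step you call the first step: that every non-trivial normal ideal $J$ of the kite contains a non-zero element of the lower part $E^I$. Your proposed justification cannot work: negations of elements of $J$ are not available inside $J$ (if $x,x^-\in J$ then $1=x^-+x\in J$, so $J$ is improper); ``multiples'' of an upper element do not exist, since upper $+$ upper is undefined by $(I)$ of Theorem~\ref{th:3.1}; and a difference of upper elements of $J$ lands in $E^I$ only when $J$ contains two \emph{comparable} distinct upper elements, which need not happen. Indeed, for genuinely partial $+$ the step (and with it the implication as stated) fails: take $E=\{0,a\}$ with $a+a$ undefined (a non-trivial directed GEA), $|I|=1$, $\lambda=\rho=\mathrm{id}$. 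Then $K^{\lambda,\rho}_I(E)$ is the four-element Boolean algebra with atoms $\langle a\rangle$, $\langle\bar a\rangle$, which has RDP$_1$; condition (1) holds ($E$ is its own least non-trivial normal ideal, connectedness is vacuous), yet $\{0,\langle a\rangle\}$ and $\{0,\langle\bar a\rangle\}$ are normal ideals with trivial intersection, so (2) fails. Note that the paper's own proof silently elides this point: its ``Equivalently'' reduction assumes that every non-trivial normal ideal meets $H_f^I\setminus\{0\}$. That assumption is legitimate when $+$ is total (then, by Remark~\ref{re:3.2}, every lower element lies below every upper element, so any upper element of $J$ drags all of $E^I$ into $J$ by downward closure), which is the setting of the original results for $E=G^+$; it is exactly what breaks for general partial addition. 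So your plan can only be completed under an extra hypothesis such as totality of $+$ on $E$, and no amount of cleverness in your ``delicate step'' will avoid this, because the counterexample above satisfies all the stated hypotheses.
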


\begin{proof}

(1) $\Rightarrow$ (2). By Theorem \ref{th:3.5}, the pseudo kite effect algebra $K^{\lambda,\rho}_I(E)$ satisfies RDP$_1.$

Let $H$ be the least non-trivial normal ideal of $E$.
By Proposition \ref{pr:6.3}, $H^I$ and $H_f^I$ are normal ideals of the kite $K^{\lambda,\rho}_I(E).$ Therefore, it is only necessary to show that $H^I_f$ is the least non-trivial normal ideal of the kite. Equivalently, we have to prove that the normal ideal of the kite generated by any nonzero element $\langle f_j\colon j \in I\rangle\in H^I_f$ equals to $H^I_f.$ This is equivalent to show the same for any one-dimensional element from $H^I_f.$ Indeed,
let $f=\langle f_j\colon j \in I\rangle$ be any element from $H^I\setminus \{0\}.$ There is a one-dimensional element $g=\langle g_j\colon j \in I\rangle\in H^I$ such that $0<\langle g_j\colon j \in I\rangle \le \langle f_j\colon j \in I\rangle.$ Hence, $H^I_f=H_0(g)\subseteq H_0(f)\subseteq H^I_f.$

Thus let $g$ be any one-dimensional element from $H^I_f$ and
let $N_0(g)$ be the normal ideal of the kite generated by the element $g.$
Without loss of generality assume $g=\langle g_0,e,\ldots\rangle,$ where $g_0>e,$ $g_0 \in E$; this is always possible by a suitable re-indexing of $I,$ regardless of its cardinality. Then $g_0$ generates $H$, that is $H=\{x\in E: x= -x_1+g_1+x_1+\cdots -x_n+g_n+x_n=y_1+g'_1-y_1+\cdots y_m+g'_m- y_m,\ 0\le g_i,g'_j\le g_0,\  x_i,y_j\in E,\ i=1,\ldots,n,\ j=1,\ldots,m,\  n,m\ge 1\},$ since $H$ is the least non-trivial  normal ideal of $G.$ (We note that we can assume that by Theorem \ref{th:2.2}, $E=\Gamma(G,G_0)$ for some po-group $G$ with RDP$_1,$ so that the elements of the form $-x_1+g_1+x_1$ and $y_1+g'_1-y_1$ exist in $G_0$  as well as in $E$.)   Doing double negations $m$ times of $\langle f_j\colon j \in I\rangle$, we obtain that  either $ \langle f_j\colon j \in I\rangle^{--m}=\langle f_{(\rho\circ \lambda^{-1})^m(j)}\colon j \in I\rangle$ and it belongs to $N_0(g)$ or $\langle f_j\colon j \in I\rangle^{\sim\sim m}=\langle f_{(\lambda\circ\rho^{-1})^m(j)}\colon j \in I\rangle$ which also belongs to $N_0(g).$  Consequently, for any $j \in I,$ there is an integer $m$ such that $(\rho\circ\lambda^{-1})^m(j)=0$ or $(\lambda\circ \rho^{-1})^m(j)=0,$ so that the one-dimensional element whose $j$-th coordinate is $g_0$ is defined in $N_0(g)$ for any $j \in J$; it is either $g^{--m}$ or $g^{\sim\sim m}.$ It is easy to show that $-f+g_0+f$ and $k+g_0-k$ belong to $N_0(g_0)$ for all $g,k\in E,$ which implies, for every $g \in H,$ the one-dimensional element $\langle g,e,\ldots \rangle$ belongs to $N_0(g_0),$ and finally, every one-dimensional element $\langle \ldots, g,\ldots \rangle$  from $H^I_f$ belongs also to $N_0(g_0).$

Now let $J_0=\{j_1,\ldots,j_n\}$ be an arbitrary finite subset of $J,$ $|J_0|=m\ge 1,$ and choose arbitrary $m$ elements $h_{j_1},\ldots,h_{j_m} \in H.$ Define an $m$-dimensional element $g_{J_0}=\langle g_j\colon j \in I\rangle,$ where $g_j=h_{j_k}$ if $j=j_k$ for some $k=1,\ldots,m,$ and $g_j=0$ otherwise. In addition, for $k=1,\ldots,m,$ let $g_k=\langle f_j\colon j \in I\rangle,$ where $f_j=h_{j_k}$ if $j=j_k$ and $f_j=e$ if $j\ne j_k.$ Then $g_{J_0}=g_1+\cdots+g_k\in N_0(g_0).$

Consequently, $N_0(g_0)=H_f^I.$

(2) $\Rightarrow$ (1). By Proposition \ref{pr:6.5}, $E$ has the least non-trivial normal ideal, say $H_0$. Suppose that (1) does not hold. Then for all
$i,j\in I$ and every integer $m\ge 0,$ we have $(\rho\circ\lambda^{-1})^m(i)\ne j$ and $(\lambda\circ \rho^{-1})^m(i)\ne j.$ We can observe that such indices $i$ and $j$ are disconnected.  By the assumption, there are two elements $i_0,j_0\in I$ which are disconnected.  Let $I_0$ and $I_1$ be  maximal sets of mutually connected elements containing $i_0$ and $j_0,$ respectively. Then no element of $I_0$ is connected to any element of $I_1.$

We define $H_0^{I_0}$ as the set of all elements $\langle f_j\colon j \in I\rangle$ such that $j\notin I_0$ implies $f_j=e.$ In a similar way we define $H^{I_1}.$ Then both sets are non-trivial normal ideals of the kite. For example, let $\langle f_j \colon j \in I\rangle\in N_0^{I_0}.$ Then $\langle f_j \colon j \in I\rangle + \langle \bar a_i \colon i \in I\rangle=
\langle \overline{a_i \minusli f_{\lambda^{-1}(i)}} \colon i \in I\rangle.$ If we set $h_j =a_{\rho(j)}\minusli (a\minusli f_{\lambda^{-1}(\rho(j))})= a_{\rho(j)}+ f_{\lambda^{-1}(\rho(j))}- a_{\rho(j)}^{-1}\in E,$
then $\langle h_j \colon j \in I\rangle \in N_0^{I_0}$ and $\langle \bar a_i \colon i \in I\rangle + \langle h_j \colon j \in I\rangle = \langle f_j \colon j \in I\rangle + \langle \bar a_i \colon i \in I\rangle.$ Similarly for the other possibilities. In the same way we can show that $H_0^{I_1}$ is a non-trivial normal ideal of the kite.

On the other hand, we have $H_0^{I_0} \cap H_0^{I_1} = \{0\}$ which contradicts that the kite has the least non-trivial normal ideal.
\end{proof}


Theorem \ref{th:6.6} has important two consequences when $I$ is finite or countable describing subdirectly irreducible kite pseudo effect algebras.

\begin{theorem}\label{th:6.7}
Let $|I|=n$ for some $n\ge 0,$ $\lambda,\rho: I \to I$ be bijections and $E$ be a non-trivial directed $\lambda,\rho$-weakly commutative GPEA. If the kite pseudo effect algebra $K^{\lambda,\rho}_I(E)$ with \RDP$_1$ has the least non-trivial normal ideal, then $E$ has the least non-trivial normal ideal and $K^{\lambda,\rho}_I(E)$ is isomorphic to one of:
\begin{enumerate}
\item[{\rm (1)}] $K^{\emptyset,\emptyset}_0(E),$ if $n=0,$ $K^{id,id}_{1}(E)$ if $n=1.$

\item[{\rm (2)}] $K^{\lambda,\rho}_n(E)$ for $n\ge 1$ and $\lambda (i)=i$ and $\rho(i) = i-1\ (\mathrm{mod}\, n).$

\end{enumerate}
\end{theorem}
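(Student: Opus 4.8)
The plan is to reduce Theorem \ref{th:6.6}'s abstract condition about connected components to a concrete classification of the bijections $\lambda,\rho$ when $I$ is finite. By Theorem \ref{th:6.6}, the hypothesis that $K^{\lambda,\rho}_I(E)$ has the least non-trivial normal ideal is equivalent to saying that $E$ has the least non-trivial normal ideal \emph{and} that all of $I$ forms a single connected component, i.e. for every pair $i,j \in I$ there is an integer $m \ge 0$ with $(\rho\circ\lambda^{-1})^m(i)=j$ or $(\lambda\circ\rho^{-1})^m(i)=j$. So the first step is simply to invoke Theorem \ref{th:6.6} to extract that $E$ has the least non-trivial normal ideal and that $I$ is connected. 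The remaining task is purely combinatorial: classify, up to isomorphism of kites, the pairs of bijections $\lambda,\rho$ on a finite set $I$ with $|I|=n$ for which $I$ is a single connected component.

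The key observation is that the relevant data is the single permutation $\sigma := \rho\circ\lambda^{-1}$ of $I$ (its inverse being $\lambda\circ\rho^{-1}$), and connectivity of $I$ means precisely that $\sigma$ acts transitively on $I$, i.e. $\sigma$ is a single $n$-cycle. For $n=0$ and $n=1$ the statement is degenerate and gives case (1) directly: when $n=0$ the kite is $K^{\emptyset,\emptyset}_0(E)$, and when $n=1$ every bijection is the identity, forcing $\lambda=\rho=\mathrm{id}$, which yields $K^{\mathrm{id},\mathrm{id}}_1(E)$. For $n\ge 1$, I would show that after a suitable re-indexing of $I$ one may normalize the situation so that $\lambda=\mathrm{id}$ and $\rho(i)=i-1 \pmod n$, giving case (2). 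The mechanism for this normalization is a relabeling lemma: a kite $K^{\lambda,\rho}_I(E)$ is isomorphic to $K^{\pi\lambda\pi^{-1},\pi\rho\pi^{-1}}_I(E)$ for any bijection $\pi$ of $I$, since relabeling coordinates transports the addition rules (II)--(IV) of Theorem \ref{th:3.1} faithfully. This conjugation freedom lets me simultaneously normalize.

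The heart of the argument is to make the conjugation choice explicit. Since $\sigma=\rho\circ\lambda^{-1}$ is an $n$-cycle, I can relabel $I=\{0,1,\dots,n-1\}$ so that $\sigma(i)=i-1 \pmod n$. However, this alone fixes only the product $\rho\circ\lambda^{-1}$, not $\lambda$ and $\rho$ individually, whereas the claimed normal form demands $\lambda=\mathrm{id}$. The subtlety I expect to be the main obstacle is therefore showing that one can additionally arrange $\lambda=\mathrm{id}$ while preserving $\sigma$: this requires a second relabeling and a check that the kite construction is invariant under replacing the pair $(\lambda,\rho)$ by $(\mathrm{id},\rho\circ\lambda^{-1})=(\mathrm{id},\sigma)$. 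I would justify this by exhibiting an explicit isomorphism of kites sending $K^{\lambda,\rho}_I(E)$ to $K^{\mathrm{id},\sigma}_I(E)$; the candidate map permutes coordinates by $\lambda$ on the lower copy $E^I$ and by an appropriate induced permutation on the upper copy $(\overline E)^I$, and one verifies it respects rules (II)--(IV) using that $\lambda^{-1}$ and $\rho^{-1}$ enter those rules only through their values. This reduces everything to $(\mathrm{id},\sigma)$ with $\sigma$ an $n$-cycle, and the first relabeling then puts $\sigma$ into the canonical form $\rho(i)=i-1 \pmod n$, delivering case (2).

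Once the normalization is complete, I would close the proof by remarking that the two listed normal forms are genuinely distinct only as the degenerate small cases versus the generic cyclic case, and that conversely each listed kite indeed satisfies the connectivity hypothesis (the cyclic $\rho$ of case (2) makes $\sigma$ an $n$-cycle, hence $I$ a single connected component), so that by the reverse implication of Theorem \ref{th:6.6} each such kite does have the least non-trivial normal ideal whenever $E$ does. The main work, to repeat, is the coordinate-relabeling lemma and the verification that passing from $(\lambda,\rho)$ to $(\mathrm{id},\rho\circ\lambda^{-1})$ yields an isomorphic kite; the transitivity-equals-single-cycle translation and the small-$n$ bookkeeping are routine.
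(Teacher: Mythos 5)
Your proposal is correct and takes essentially the same route as the paper: invoke Theorem \ref{th:6.6} to extract that $E$ has the least non-trivial normal ideal and that $I$ is a single connected component, normalize $\lambda$ to the identity by relabeling coordinates, conclude from connectivity that the remaining permutation must be a single $n$-cycle, and renumber $I$ along that cycle to obtain $\rho(i)=i-1 \pmod n$. The only difference is that you make explicit the coordinate-relabeling isomorphism $K^{\lambda,\rho}_I(E)\cong K^{\mathrm{id},\rho\circ\lambda^{-1}}_I(E)$ (permuting the lower copy $E^I$ by $\lambda$ and fixing the upper copy), a step the paper dispatches with a ``without loss of generality''.
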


\begin{proof}
In any case, $E$ has the least non-trivial  normal ideal.

If $I$ is empty, the only bijection from $I$ to $I$ is the empty function. The kite pseudo effect algebra $K^{\emptyset,\emptyset}_0(E)$ is a two-element Boolean algebra.
For $n\ge 1,$ we can assume without loss of generality that $\lambda$ is the identity map and $\rho$ is a permutation of $I=\{0,1,\ldots,n-1\}.$ If $\rho$ is not cyclic, then there are $i,j \in I$ such that $j$ does not belongs to the orbit of $i,$ which means that $i$ and $j$ are not connected which contradicts Theorem \ref{th:6.6}. So $\rho$ must be cyclic. We can then renumber $I=\{0,1,\ldots,n-1\}$ following the $\rho$-cycle, so that $\rho(i) = i-1\ (\mathrm{mod}\, n),$ $i =0,1,\ldots,n-1.$
\end{proof}

The proofs of the following two results are  identical as those from \cite[Lem 6.8, Thm 6.9]{DvuK} for the case $E=G^+$, but to be selfcontained, we present them here in fullness.

Now we show that a necessary condition to be a kite $K^{\lambda,\rho}_I(E)$ with $I$  infinite  subdirectly irreducible  is  $I$ has to be at most countable.

\begin{lemma}\label{le:6.8}
Let $I$ be a set and $\lambda,\rho:I \to I$ be bijections and let $E$ be a non-trivial directed $\lambda,\rho$-weakly commutative GPEA. If the kite pseudo effect algebra $K^{\lambda,\rho}_I(E)$ with \RDP$_1$ has the least non-trivial normal ideal, then $I$ is at most countable.
\end{lemma}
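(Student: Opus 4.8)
The plan is to reduce the statement to the structural dichotomy already established in Theorem \ref{th:6.6}, whose hypotheses coincide with those of the present lemma: $E$ is a non-trivial directed $\lambda,\rho$-weakly commutative GPEA and the kite $K^{\lambda,\rho}_I(E)$ satisfies RDP$_1$. First I would invoke that theorem. Since by assumption $K^{\lambda,\rho}_I(E)$ has the least non-trivial normal ideal, statement (2) of Theorem \ref{th:6.6} holds, and hence so does statement (1). In particular the ``orbit condition'' holds: for every pair $i,j\in I$ there is an integer $m\ge 0$ with $(\rho\circ\lambda^{-1})^m(i)=j$ or $(\lambda\circ\rho^{-1})^m(i)=j$. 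Phrased via the connected-component terminology introduced just before Theorem \ref{th:6.6}, this says that all of $I$ forms a single connected component.

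The key observation is that this condition forces $I$ to be the orbit of a single point under one bijection. Write $\sigma:=\rho\circ\lambda^{-1}$, a bijection of $I$ onto itself whose inverse is $\lambda\circ\rho^{-1}$, so that $(\lambda\circ\rho^{-1})^m=\sigma^{-m}$ for all $m\ge 0$. The orbit condition then merges into the single two-sided statement that for all $i,j\in I$ there is $k\in\mathbb Z$ with $\sigma^k(i)=j$. If $I=\emptyset$ the conclusion is trivial, so assume $I\ne\emptyset$ and fix $i_0\in I$. The condition then gives $I=\{\sigma^k(i_0)\colon k\in\mathbb Z\}$.

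Finally, the map $\mathbb Z\to I$ sending $k\mapsto\sigma^k(i_0)$ is a surjection from a countable set onto $I$; therefore $I$ is at most countable, which is the desired conclusion. I do not expect any genuine obstacle here: all the substantive work is front-loaded into Theorem \ref{th:6.6}. The only point requiring care is the bookkeeping identity $\lambda\circ\rho^{-1}=(\rho\circ\lambda^{-1})^{-1}$, which is what allows the two ``directions'' of the orbit condition to be combined into a single two-sided $\sigma$-orbit, together with the elementary fact that the orbit of a point under (the cyclic group generated by) a single bijection is always at most countable.
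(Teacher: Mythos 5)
Your proof is correct and is essentially the paper's own argument: both rest entirely on Theorem \ref{th:6.6} (the least non-trivial normal ideal forces all pairs $i,j \in I$ to be connected) together with the fact that the orbit of a single point under the bijection $\rho\circ\lambda^{-1}$ is at most countable. The paper merely phrases this contrapositively—if $I$ were uncountable, some $j$ would lie outside the countable orbit $P(i)$ and hence be disconnected from $i$—whereas you argue directly that $I$ is a single $\sigma$-orbit; the mathematical content is identical.
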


\begin{proof}
Suppose $I$ is uncountable and choose an element $i\in I.$ Consider the set $P(i)=\{(\rho\circ\lambda^{-1})^m(i):\ m \ge 0\} \cup \{(\lambda\circ \rho^{-1})^m(i): m\ge 0\}.$ The set $P(i)$ is at most countable, so there is $j \in I\setminus P(i).$ But $P(i)$ exhausts all finite paths alternating $\lambda$ and $\rho$ starting from $i.$ Then $i$ and $j$ are disconnected which contradicts Theorem \ref{th:6.6}. Hence, $I$ is at most countable.
\end{proof}

Finally, we present kite pseudo effect algebras having the least non-trivial normal ideal when $I$ is infinite. In such a case by Lemma \ref{le:6.8}, $I$ has to be countable.

\begin{theorem}\label{th:6.9}
Let $|I|=\aleph_0,$ $\lambda,\rho: I \to I$ be bijections and let $E$ be a non-trivial directed $\lambda,\rho$-directed GPEA. If the kite pseudo effect algebra $K^{\lambda,\rho}_I(E)$ with \RDP$_1$  has the least non-trivial normal ideal, then $K^{\lambda,\rho}_I(E)$ is isomorphic to $K^{id,\rho}_\mathbb Z(E),$ where $\rho(i)=i-1,$ $i \in \mathbb Z.$
\end{theorem}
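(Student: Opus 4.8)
The plan is first to eliminate $\lambda$ by a relabelling of coordinates, then to read off the shape of $\rho$ from the single connected component forced by Theorem~\ref{th:6.6}, and finally to transport everything onto $\mathbb Z$. I would begin by recording the gauge freedom of the construction: for any bijections $\alpha,\beta\colon I\to J$ the assignment sending a lower element $\langle f_j\rangle$ to $\langle f_{\alpha^{-1}(j')}\colon j'\in J\rangle$ and an upper element $\langle\bar a_i\rangle$ to $\langle \bar a_{\beta^{-1}(i')}\colon i'\in J\rangle$ is an isomorphism
\[
K^{\lambda,\rho}_I(E)\;\cong\;K^{\beta\lambda\alpha^{-1},\,\beta\rho\alpha^{-1}}_J(E).
\]
Since this map is a bijection intertwining the partial operation, it is automatically an isomorphism of pseudo effect algebras, so no separate weak-commutativity check for the target is needed. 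Choosing $J=I$, $\beta=\mathrm{id}$ and $\alpha=\lambda$ gives $K^{\lambda,\rho}_I(E)\cong K^{\mathrm{id},\tau}_I(E)$ with $\tau:=\rho\circ\lambda^{-1}$, so from now on I may assume $\lambda=\mathrm{id}$ and $\rho=\tau$.

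With $\lambda=\mathrm{id}$, the connectedness relation defined just before Theorem~\ref{th:6.6} is precisely the orbit equivalence of the bijection $\tau$. Because $K^{\lambda,\rho}_I(E)$ has the least non-trivial normal ideal, Theorem~\ref{th:6.6} ($(2)\Rightarrow(1)$) makes all indices connected, i.e. $I$ is a single $\tau$-orbit; and since $|I|=\aleph_0$ this orbit is infinite, a finite orbit forcing $I$ finite. Hence $\tau$ has no periodic point, the powers $\tau^k(i_0)$ for $k\in\mathbb Z$ and a fixed $i_0\in I$ are pairwise distinct and exhaust $I$, and $\sigma\colon\mathbb Z\to I$, $\sigma(k):=\tau^{-k}(i_0)$, is a bijection with $\sigma^{-1}\circ\tau\circ\sigma\colon k\mapsto k-1$.

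Now I would apply the gauge isomorphism once more with $J=\mathbb Z$ and $\alpha=\beta=\sigma^{-1}$. This leaves $\lambda=\mathrm{id}$ fixed and conjugates $\tau$ to the shift $i\mapsto i-1$, giving $K^{\mathrm{id},\tau}_I(E)\cong K^{\mathrm{id},\rho}_{\mathbb Z}(E)$ with $\rho(i)=i-1$. Composing the two isomorphisms yields $K^{\lambda,\rho}_I(E)\cong K^{\mathrm{id},\rho}_{\mathbb Z}(E)$, as claimed.

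The step I expect to be the main obstacle is justifying the gauge isomorphism, i.e. verifying that relabelling the lower and upper coordinates independently really carries the partial operation of $K^{\lambda,\rho}_I(E)$ onto that of $K^{\beta\lambda\alpha^{-1},\beta\rho\alpha^{-1}}_J(E)$. This is index bookkeeping inside the defining tables $(II)$ and $(III)$ of Theorem~\ref{th:3.1}: after substituting the relabelled sequences one checks that the index $(\beta\rho\alpha^{-1})^{-1}(i')=\alpha\rho^{-1}\beta^{-1}(i')$ occurring in the target's rule $(II)$ reproduces exactly the entry $f_{\rho^{-1}(\beta^{-1}(i'))}$ of the original, and symmetrically for $\lambda$ in rule $(III)$, while $(I)$, $(IV)$ and the constants $0,1$ are preserved on sight. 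Everything else — existence of the least non-trivial normal ideal in $E$, the single-orbit conclusion, and the bi-infiniteness of the orbit — is immediate from Theorem~\ref{th:6.6} and a cardinality count.
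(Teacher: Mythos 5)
Your proof is correct and takes essentially the same route as the paper's: the paper likewise reduces to $\lambda=\mathrm{id}$ by re-indexing, uses Theorem \ref{th:6.6} to conclude that $\rho$ (your $\tau$) must act on $I$ with a single infinite orbit, and identifies that orbit with $\mathbb Z$ via $j_m=\rho^{-m}(i_0)$, which is exactly your map $\sigma$. The only difference is that you make explicit the relabelling isomorphism $K^{\lambda,\rho}_I(E)\cong K^{\beta\lambda\alpha^{-1},\beta\rho\alpha^{-1}}_J(E)$ that the paper leaves implicit in the phrase ``after re-indexing,'' which is a welcome but not substantively different addition.
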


\begin{proof}
After re-indexing we can assume that $\lambda$ is the identity on $I.$ If $\rho$ is not cyclic, there would be two elements which should be disconnected which is impossible. Therefore, there is an element $i_0\in I$ such that the orbit $P(i_0):=\{\rho^m(i_0): m \in \mathbb Z\}=I.$ Hence, we can assume that $I=\mathbb Z,$ and $\rho(i)=i-1,$ $i \in \mathbb Z.$ Indeed, if we set $j_m=\rho^{-m}(i_0),$ $m\in \mathbb Z,$ then $\rho(j_m)=j_{m-1}$ and we have $\rho(i)=i-1,$ $i \in \mathbb Z.$
\end{proof}

It is possible to define kite pseudo effect algebras also in the following way. Let $J$ and $I$ be two sets, $\lambda,\rho\colon J \to I$ be two bijections, and $E$ be a $\lambda,\rho$-weakly commutative GPEA. We define $K^{\lambda,\rho}_{J,I}(E)= (E)^J \uplus (\overline{E})^I,$ where $\uplus$ denotes a union of disjoint sets. The elements $0=\langle e_j\colon j \in J\rangle$ and $1=\langle \bar e\colon i \in I\rangle$ are assumed to be the least and greatest elements of $ K^{\lambda,\rho}_{J,I}(E).$  If we define a partial operation $+$ by Theorem \ref{th:3.1}, changing in formulas $(II)-(IV)$ the notation $j \in I$ to $j \in J,$ we obtain that $K_{J,I}^{\lambda,\rho}(E) =(K^{\lambda,\rho}_{J,I}(E);+,0,1) $ with this $+,$ $0$ and $1$ is a pseudo effect algebra, called also a {\it kite pseudo effect algebra.} In particular, $ K^{\lambda,\rho}_{I}(E) = K^{\lambda,\rho}_{I,I}(E).$

Since $J$ and $I$ are of the same cardinality, there is practically no substantial difference between kite pseudo effect algebras of the form $K^{\lambda,\rho}_{I}(E)$ and $K^{\lambda,\rho}_{I,J}(E)$ and all known results holding for the first kind are also valid for the second one. We note that in \cite{DvKo}, the ``kite" structure used two index sets, $J$ and $I$. The second form, which was used also in the proof of \cite[Thm 3.6]{DvHo},  will be practical also for the following result; its proof here follows the main ideas of the original proof.

\begin{lemma}\label{le:7.4}
Let $K_{I}^{\lambda,\rho}(E)$ be the kite pseudo effect algebra with \RDP$_1$ of a directed $\lambda,\rho$-weakly commutative GPEA $E$. Then
$K_{I}^{\lambda,\rho}(E)$ is a subdirect
product of the system of kite pseudo effect algebras with \RDP$_1$ $(K_{I',J'}^{\lambda',\rho'}(E)\colon I'\in \mathcal C(I))$, where $I'$ is any
connected component of $I$, $J'=\lambda^{-1}(I')=\rho^{-1}(I'),$ and $\lambda',\rho'\colon J'\to I'$ are the restrictions of $\lambda$ and $\rho$ to $J'\subseteq I.$
\end{lemma}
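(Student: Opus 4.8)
The plan is to realize $K^{\lambda,\rho}_I(E)$ as a subdirect product via the restriction maps onto the components. Since $\{I'\colon I'\in\mathcal C(I)\}$ partitions $I$ and $\lambda$ is a bijection, the sets $J'=\lambda^{-1}(I')=\rho^{-1}(I')$ partition $I$ as well, and $\lambda,\rho$ restrict to bijections $\lambda',\rho'\colon J'\to I'$. Because $\lambda,\rho$-weak commutativity is imposed coordinate-by-coordinate, $E$ is automatically $\lambda',\rho'$-weakly commutative, so each $K^{\lambda',\rho'}_{J',I'}(E)$ (down part $E^{J'}$, up part $(\overline E)^{I'}$) is a genuine kite pseudo effect algebra in the two-index sense introduced above. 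For each component I would define $\pi_{I'}\colon K^{\lambda,\rho}_I(E)\to K^{\lambda',\rho'}_{J',I'}(E)$ by sending a down element $\langle f_j\colon j\in I\rangle$ to $\langle f_j\colon j\in J'\rangle$ and an up element $\langle \bar a_i\colon i\in I\rangle$ to $\langle \bar a_i\colon i\in I'\rangle$, and set $h=(\pi_{I'})_{I'\in\mathcal C(I)}$.

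First I would check that each $\pi_{I'}$ is a homomorphism of pseudo effect algebras. The only delicate point is that the partial addition must not couple coordinates lying in distinct components, and this is exactly what $\lambda^{-1}(I')=\rho^{-1}(I')=J'$ guarantees: in $(II)$ the $i$-th up-coordinate of a sum is $\overline{f_{\rho^{-1}(i)}\minusre a_i}$ with definedness condition $f_{\rho^{-1}(i)}\le a_i$, and for $i\in I'$ one has $\rho^{-1}(i)\in J'$; likewise $(III)$ uses only the index $\lambda^{-1}(i)\in J'$ for $i\in I'$, while $(IV)$ is already coordinatewise. Hence whenever $x+y$ is defined in the kite, the restricted definedness conditions over $I'$ and $J'$ hold, so $\pi_{I'}(x)+\pi_{I'}(y)$ is defined and equals $\pi_{I'}(x+y)$, and clearly $\pi_{I'}(1)=1$. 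Each $\pi_{I'}$ is surjective: a down element of the factor is hit by extending it with $0$ on the coordinates outside $J'$, and an up element by extending with $\bar 0$ outside $I'$.

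Next I would verify that $h$ is injective, which is what makes $K^{\lambda,\rho}_I(E)$ a subdirect product of $(K^{\lambda',\rho'}_{J',I'}(E)\colon I'\in\mathcal C(I))$. Since $\pi_{I'}$ carries down elements to down elements and up elements to up elements, and the two kinds are disjoint in each factor, it suffices to separate two distinct down elements and two distinct up elements. But distinct down elements differ at some coordinate $j\in I$, and $j$ belongs to exactly one $J'$, so $\pi_{I'}$ already distinguishes them; the up case is identical using that the $I'$ partition $I$. Together with the homomorphism property and surjectivity of each $\pi_{I'}$, this yields the claimed subdirect decomposition.

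Finally, each factor inherits \RDP$_1$. Because $K^{\lambda,\rho}_I(E)$ satisfies \RDP$_1$ and $\pi_{I'}$ is a surjective homomorphism, its kernel $\{x\colon \pi_{I'}(x)=0\}$ is a normal ideal and $K^{\lambda',\rho'}_{J',I'}(E)\cong K^{\lambda,\rho}_I(E)/\ker\pi_{I'}$, so by the fact that a quotient of a pseudo effect algebra with \RDP$_1$ by a normal ideal again satisfies \RDP$_1$ (cf. Remark \ref{re:2} and Proposition \ref{pr:4.1}), every factor has \RDP$_1$. I expect the homomorphism verification of the first step to be the main obstacle, specifically the no-leakage-across-components property resting on $\lambda^{-1}(I')=\rho^{-1}(I')$; the injectivity, surjectivity, and \RDP$_1$-inheritance arguments are then routine.
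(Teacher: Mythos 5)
Your proposal is correct and takes essentially the same route as the paper: your projections $\pi_{I'}$ are exactly the paper's quotient maps modulo the normal ideals $N_{I'}=\{\langle f_j\colon j\in I\rangle\in E^I\colon f_j=0 \mbox{ for all } j\in J'\}$ (these are precisely your kernels $\ker\pi_{I'}$), and the injectivity of your map $h$ is the paper's observation that $\bigcap_{I'\in\mathcal C(I)}N_{I'}=\{0\}$. The remaining ingredients---the $\lambda',\rho'$-weak commutativity of $E$, the identification of each factor with a quotient of the kite, and the inheritance of RDP$_1$ under quotients by normal ideals---occur in the paper's proof in the same way.
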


\begin{proof}
If $E$ is trivial, the statement is evident. Now, let $E$ be non-trivial.
By the comments before this lemma, we see that $\lambda',\rho'\colon J'\to I'$ are bijections.
Let $I'$ be a connected component of $I$.
Let $N_{I'}$ be the set of all elements
$f = \langle f_j: j \in I\rangle\in (E)^I$
such that  $f_j = 0$ whenever $j\in J'$.
It is straightforward to see that $N_{I'}$ is a normal ideal
of $K_{I}^{\lambda,\rho}(E)$.

We note that under our conditions, $E$ is also $\lambda',\rho'$-weakly commutative, so that $K_{J',I'}^{\lambda',\rho'}(E)$ is by Theorem \ref{th:3.1} again a pseudo effect algebra.

It is also not difficult
to see that $K_{I}^{\lambda,\rho}(E)/N_{I'}$ is isomorphic to
$K_{J',I'}^{\lambda',\rho'}(E)$ and, by \cite{185, DvVe3}, $K_{J',I'}^{\lambda',\rho'}(E)$ satisfies RDP$_1$.

For each $I'\in \mathcal C(I),$ let $N_{I'}$ be the normal filter defined as above.
As connected components are disjoint, we have
$\bigcap_{I'\in \mathcal C(I)} N_{I'} = \{0\}.$ This proves $K_{I}^{\lambda,\rho}(E)= K_{I,I}^{\lambda,\rho}(E)\leq \prod_{I'\in \mathcal C(I)} K_{I',J'}^{\lambda',\rho'}(E).$
\end{proof}

This result entails the following important statement on representability of kite pseudo effect algebras as a subdirect product of other subdirectly irreducible kites with RDP$_1$.

\begin{theorem}\label{th:7.5}
Every kite pseudo effect algebra $K_{I}^{\lambda,\rho}(E)$ with \RDP$_1$, where $E$ is a directed $\lambda,\rho$-weakly commutative GPEA, is  a subdirect product of subdirectly irreducible kite pseudo effect algebras with \RDP$_1.$
\end{theorem}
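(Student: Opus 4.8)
The plan is to combine the two reduction lemmas already available, namely Lemma \ref{le:7.1}, which decomposes the coefficient GPEA $E$, and Lemma \ref{le:7.4}, which decomposes the index set $I$ into connected components, and then to invoke the irreducibility criterion of Theorem \ref{th:6.6}. First I would note that since $K^{\lambda,\rho}_I(E)$ has \RDP$_1$, Theorem \ref{th:3.5}(1) forces $E$ itself to satisfy \RDP$_1$; as $E$ is directed, Lemma \ref{le:7.1} applies and writes $E$ as a subdirect product $E \le \prod_{s\in S} E_s$ of subdirectly irreducible directed GPEAs $E_s$ with \RDP$_1$. Concretely, following the proof of that lemma, $E_s = E/H_s$ for normal ideals $H_s$ of $E$ with $\bigcap_{s} H_s = \{0\}$, and each $E_s$ is non-trivial and hence possesses the least non-trivial normal ideal.

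The decisive technical point, and the step I expect to cost the most work, is to lift this decomposition of $E$ to the level of the kite, i.e. to establish, for any normal ideal $H$ of $E$, the isomorphism $K^{\lambda,\rho}_I(E)/H^I \cong K^{\lambda,\rho}_I(E/H)$, where $H^I$ is a normal ideal of the kite by Proposition \ref{pr:6.3}. I would prove this by exhibiting the map $\Phi$ obtained by applying the canonical projection $E \to E/H$ coordinatewise, $\langle f_j\rangle \mapsto \langle f_j/H\rangle$ on the lower part and $\langle \bar a_i\rangle \mapsto \langle \overline{a_i/H}\rangle$ on the upper part, and checking that $\Phi$ is a surjective homomorphism of pseudo effect algebras: compatibility with rules $(II)$--$(IV)$ of Theorem \ref{th:3.1} reduces to the facts that the projection preserves $+$, the order, and the partial operations $\minusli,\minusre$, the last by uniqueness of differences in $E/H$. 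A direct computation shows that the normal ideal $\{x : \Phi(x)=0\}$ associated with the congruence of $\Phi$ is exactly $H^I$ (only lower elements can map to $0$), and since the kite has \RDP$_1$, the one-to-one correspondence between congruences and normal ideals for \RDP$_1$ structures (recorded just before Proposition \ref{pr:4.1}) yields the claimed isomorphism, the quotient again carrying \RDP$_1$ by Remark \ref{re:2}.

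With this identification in hand the assembly is routine. Taking $H=H_s$ and using $\bigcap_s H_s^I = \{0\}$, I obtain a subdirect embedding $K^{\lambda,\rho}_I(E) \le \prod_{s\in S} K^{\lambda,\rho}_I(E_s)$ into kites with \RDP$_1$. Next, to each factor $K^{\lambda,\rho}_I(E_s)$ I apply Lemma \ref{le:7.4}, which presents it as a subdirect product of the connected-component kites $K^{\lambda',\rho'}_{I',J'}(E_s)$, $I'\in\mathcal{C}(I)$, all with \RDP$_1$, where $J'=\lambda^{-1}(I')=\rho^{-1}(I')$ and $\lambda',\rho'$ are the corresponding restrictions. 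Finally, each such factor is subdirectly irreducible: within a connected component every pair of indices is connected, and $E_s$ has the least non-trivial normal ideal, so condition (1) of Theorem \ref{th:6.6} holds and hence so does (2), i.e. $K^{\lambda',\rho'}_{I',J'}(E_s)$ has the least non-trivial normal ideal. Composing the two subdirect decompositions, by transitivity of subdirect products, exhibits $K^{\lambda,\rho}_I(E)$ as a subdirect product of the subdirectly irreducible kite pseudo effect algebras $K^{\lambda',\rho'}_{I',J'}(E_s)$, $s\in S$, $I'\in\mathcal{C}(I)$, each with \RDP$_1$, which is the assertion.
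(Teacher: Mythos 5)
Your proposal is correct and takes essentially the same route as the paper's proof: reduce to the case of a subdirectly irreducible coefficient GPEA (the paper compresses your Lemma \ref{le:7.1} step into the phrase ``using Remark \ref{re:2}, we can reduce (i) to (ii)''), then decompose along connected components via Lemma \ref{le:7.4} and conclude with the irreducibility criterion of Theorem \ref{th:6.6}. Your explicit construction of the coordinatewise quotient isomorphism $K^{\lambda,\rho}_I(E)/H^I \cong K^{\lambda,\rho}_I(E/H)$ spells out in detail the lifting step that the paper leaves implicit.
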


\begin{proof}
If $E$ is trivial, the statement is evident. Now, let $E$ be non-trivial and
take the kite $K_{I}^{\lambda,\rho}(E)$. By Theorem \ref{th:2.2}, $E$ satisfies RDP$_1.$ If the kite is not subdirectly
irreducible, then there are two possible cases:
(i) $E$ is not subdirectly irreducible, or
(ii) $E$ is subdirectly irreducible but
there exist $i,j\in I$ such that, for every
$m\in\mathbb N,$ we have $(\rho\circ\lambda^{-1})^m(i)\neq j$ and
$(\lambda\circ\rho^{-1})^m(i)\neq j$. Observe that this happens if and only if
$i$ and $j$ do not belong to the same connected component of $I$.

Now, using Remark \ref{re:2},
we can reduce (i) to (ii). So, suppose $E$ is subdirectly irreducible.
Then, using Lemma~\ref{le:7.4}, we can subdirectly embed
$K_{I}^{\lambda,\rho}(E)$ into
$\prod_{I'} K_{I',J'}^{\lambda',\rho'}(E)$, where $I'$
ranges over the connected components of $I,$ $J'=\lambda^{-1}(I')$ and $\lambda', \rho'$ are restrictions of $\lambda, \rho$ to $J'.$ But then, each
$K_{I',J'}^{\lambda,\rho}(E)$ is subdirectly irreducible by
Theorem \ref{th:6.6} and by Theorem \ref{th:2.2}, it satisfies RDP$_1.$
\end{proof}

Now let $E$ be a directed GPEA with RDP$_1.$ By Theorem \ref{th:2.2}, $E\cong \Gamma(G,G_0),$ where $G$ is a po-group with RDP$_1$ and $G_0$ is a directed GPEA that is a subalgebra of the GPEA $G^+.$ In what follows, we show a relationship between the kite pseudo effect algebras $K_{I}^{\lambda,\rho}(E)$ and $K_{I}^{\lambda,\rho}(G)_{ea}:= K_{I}^{\lambda,\rho}(G^+)$ and their relation to the po-group $G$.

\begin{theorem}\label{th:4.12}
Let $E=\Gamma(G,G_0)$ be a directed $\lambda,\rho$-weakly commutative GPEA with \RDP$_1$, where $G$ is a directed po-group satisfying and $G_0$ is a non-trivial directed GPEA which is a subalgebra of $G^+$ and $G_0$ generates $G.$
Then
\begin{enumerate}
\item[{\rm(i)}] The kite pseudo effect algebra $K_{I}^{\lambda,\rho}(E)$ is a subalgebra of the kite pseudo effect $K_{I}^{\lambda,\rho}(G^+)$.
\item[{\rm(ii)}] $K_{I}^{\lambda,\rho}(E)$ has the least non-trivial normal ideal if and only if $K_{I}^{\lambda,\rho}(G^+)$ has the least non-trivial o-ideal.
\item[{\rm(iii)}] The following are equivalent:
\begin{enumerate}
\item[{\rm (1)}] $G$ has the least non-trivial o-ideal and for all $i,j\in I$ there exists an integer $m\ge 0$ such that $(\rho\circ\lambda^{-1})^m(i)=j$ or $(\lambda\circ \rho^{-1})^m(i)=j.$
\item[{\rm (2)}] $K^{\lambda,\rho}_I(E)$ has the least non-trivial normal ideal.
\end{enumerate}
\end{enumerate}

\end{theorem}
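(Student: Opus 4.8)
The plan is to prove the three parts in order, leveraging the dictionary between $E=\Gamma(G,G_0)$ and $G$ supplied by Theorem~\ref{th:2.2} and Proposition~\ref{pr:4.1}, together with the normal-ideal analysis already carried out in Theorem~\ref{th:6.6}. For part (i), I would exhibit $K_{I}^{\lambda,\rho}(E)$ as a sub-PEA of $K_{I}^{\lambda,\rho}(G^+)$ directly from the construction of Theorem~\ref{th:3.1}. Since $G_0$ is a subalgebra of $G^+$ and $\overline{G_0}\subseteq\overline{G^+}$, every sequence in $(G_0)^I\uplus(\overline{G_0})^I$ is already a sequence in $(G^+)^I\uplus(\overline{G^+})^I$; the constants $0=0^I$ and $1=\bar 0^I$ coincide, and the addition of both kites is defined coordinatewise by the same formulas $(II)$--$(IV)$, so the embedding is a PEA-homomorphism preserving $+$, $0$, $1$ and the negations $a^-,a^\sim$ listed in Theorem~\ref{th:3.1}. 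I must only check the sub-PEA closure conditions, i.e.\ that whenever two of $x,y,x+y$ lie in $K_{I}^{\lambda,\rho}(E)$ the third does too; this reduces coordinatewise to the fact that $G_0$ is a subalgebra of $G^+$ (for the down/up cases the relevant differences $a\minusre b$, $b\minusli a$ stay in $G_0$ by the convexity of $G_0$ in $G^+$).

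For part (ii), the key is Proposition~\ref{pr:6.3} together with the correspondence of Proposition~\ref{pr:4.1}. First I would use Proposition~\ref{pr:4.1} to translate ``least non-trivial o-ideal of $G$'' into a statement about normal ideals of the kites. I expect to argue that normal ideals of $K_{I}^{\lambda,\rho}(E)$ that are contained in $E^I$ correspond, via $H\mapsto H^I_f$ and the projections $\pi_j$ of Proposition~\ref{pr:6.3}, to normal ideals of $E$, and these in turn correspond to o-ideals of $G$ by Proposition~\ref{pr:4.1}; since by Proposition~\ref{pr:6.3} the maximal ideal $E^I$ sits below $1$ and every non-trivial normal ideal of the kite is forced inside $E^I$, the existence of a smallest non-trivial normal ideal of the kite is equivalent to the existence of a smallest non-trivial normal ideal of $E$, equivalently (by Proposition~\ref{pr:4.1}) a smallest non-trivial o-ideal of $G$. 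The RDP$_1$ hypothesis on the kite guarantees, via Theorem~\ref{th:3.5}, that $E$ and hence $G$ inherit RDP$_1$, so Proposition~\ref{pr:4.1} applies throughout.

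Part (iii) is then essentially a restatement of Theorem~\ref{th:6.6} transported across the $\Gamma$-correspondence. By Proposition~\ref{pr:4.1}, $E$ has the least non-trivial normal ideal if and only if $G$ has the least non-trivial o-ideal (the bijection $I\mapsto\phi(I)$ between normal ideals of $E$ and o-ideals of $G$ preserves inclusion, hence preserves the property of having a least non-trivial member). Substituting this equivalence into condition~(1) of Theorem~\ref{th:6.6} converts it verbatim into condition~(1) here, while condition~(2) is literally the conclusion of Theorem~\ref{th:6.6}(2); so (1)$\Leftrightarrow$(2) follows by citing Theorem~\ref{th:6.6}. I expect the main obstacle to be part (ii): one must be careful that there are no ``small'' normal ideals of the kite lying outside $E^I$ and that the passage between finite-dimensional ideals $H^I_f$ and the full $H^I$ does not destroy minimality. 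This is exactly the point where the connectivity machinery of Theorem~\ref{th:6.6} (the orbits under $\rho\circ\lambda^{-1}$ and $\lambda\circ\rho^{-1}$) is needed to show that the ideal generated by a single one-dimensional element spreads across a whole connected component, so that minimality of the o-ideal of $G$ translates into minimality of $H^I_f$ rather than some strictly smaller ideal. Once that is in hand, (ii) and (iii) follow cleanly.
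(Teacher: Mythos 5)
Parts (i) and (iii) of your proposal are essentially in order: (i) is the paper's ``evident'' claim with the closure details supplied, and your derivation of (iii) from Theorem~\ref{th:6.6} combined with the inclusion-preserving bijection of Proposition~\ref{pr:4.1} between normal ideals of $E$ and o-ideals of $G$ is a legitimate route; the paper instead obtains (iii) by comparing Theorem~\ref{th:6.6} with its po-group ancestor \cite[Thm 6.6]{DvuK} applied to $K^{\lambda,\rho}_I(G^+)$, which is an argument of the same weight. It is fortunate that your (iii) nowhere uses your (ii), because (ii) is where the proposal genuinely fails.

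The first problem in (ii) is a misreading. The statement compares the kite $K^{\lambda,\rho}_I(E)$ with the kite $K^{\lambda,\rho}_I(G^+)$ built over the \emph{same} data $(I,\lambda,\rho)$; you instead set out to prove ``$K^{\lambda,\rho}_I(E)$ has a least non-trivial normal ideal iff $G$ has a least non-trivial o-ideal.'' That equivalence is false, and the difference is exactly the connectivity condition, which (ii) deliberately does not assume. Take $E=G^+=\mathbb Z^+$, $\lambda=\rho=\mathrm{id}$, $|I|=2$: then $G=\mathbb Z$ has a least non-trivial o-ideal, but the two indices are disconnected, so the elements of $E^I$ supported on the first coordinate and those supported on the second form two non-trivial normal ideals of $K^{\lambda,\rho}_I(E)$ with trivial intersection, and no least one exists. (Both sides of the actual statement (ii) are false here, so (ii) survives; your version does not.) This is precisely why the paper proves (ii) without ever leaving the two kites: it sends the least non-trivial normal ideal $J$ of $K^{\lambda,\rho}_I(E)$ to the normal ideal $N(J)=\{x\in K^{\lambda,\rho}_I(G^+)\colon x\le x_1+\cdots+x_n,\ x_i\in J,\ n\ge 1\}$ it generates upstairs, and conversely intersects the least normal ideal $K$ of $K^{\lambda,\rho}_I(G^+)$ with $K^{\lambda,\rho}_I(E)$, using (i) and the hypothesis that $G_0$ generates $G$ (every element of $G^+$ is a finite sum of elements of $E$, which makes $K\cap K^{\lambda,\rho}_I(E)$ non-trivial). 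Connectivity never enters because the index data are identical on both sides.

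The second problem is the auxiliary claim your reduction rests on, namely that ``every non-trivial normal ideal of the kite is forced inside $E^I$.'' Proposition~\ref{pr:6.3} says only that $E^I$ is \emph{a} maximal ideal, not the unique one, and the claim is false when $+$ is not total. Take $G=\mathbb Z$ and $G_0=\{0,1\}$ (convex, directed, generating), so that $E=\Gamma(G,G_0)=\{0,e\}$ with $e+e$ undefined, and let $|I|=1$, $\lambda=\rho=\mathrm{id}$. The kite is then the four-element Boolean algebra $\{0,e,\bar e,1\}$ with $e+\bar e=\bar e+e=1$, and $\{0,\bar e\}$ is a non-trivial normal ideal meeting $E^I=\{0,e\}$ only in $0$. (When $+$ is total the claim does hold, since the kite is then perfect and any ideal containing an upper element contains $1$; but totality is not a hypothesis of Theorem~\ref{th:4.12}.) The connectivity machinery you invoke at the end cannot repair this: it governs how one-dimensional \emph{lower} elements spread across a connected component and says nothing about normal ideals living in the upper part $(\overline E)^I$ --- indeed, this same example shows that upper-part ideals are a genuine danger zone for any argument, not a technicality that can be waved away.
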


\begin{proof}
(i)  It is evident.

(ii) Let $J$ be the least non-trivial normal ideal of $E$ and let $N(J)$ be the normal ideal of $K^{\lambda,\rho}_I(E)$ generated by $J$. If $K$ is a non-trivial normal ideal of $K^{\lambda,\rho}_I(E)$ such that $K\subseteq N(J),$ then $K_0:=K\cap K^{\lambda,\rho}_I(E)$ is a normal ideal of $K^{\lambda,\rho}_I(E)$. Since $N(J)=\{x \in K^{\lambda,\rho}_I(G^+) \colon x \le x_1+\cdots+x_n,\ x_i \in J,\ i=1,\ldots,n,\ n \ge 1\}$, we have $K_0$ is non-trivial and whence $K_0=J$ and $N(J)$ is the least non-trivial normal ideal of $K^{\lambda,\rho}_I(G^+)$, alias $K^{\lambda,\rho}_I(G^+)$ is subdirectly irreducible.

Conversely, let $K^{\lambda,\rho}_I(G^+)$ be subdirectly irreducible with the least normal ideal $K$ and set $K_0=K\cap K^{\lambda,\rho}_I(E).$ Then $K_0$ is a normal ideal of $K^{\lambda,\rho}_I(E).$  Since every element $g\in G^+$ is the sum of elements of $E$, we see $K_0$ is a non-trivial normal ideal.  Now assume $J$ be any non-trivial normal ideal of $K^{\lambda,\rho}_I(E)$ such that $J\subseteq K_0$. We assert that $J=K_0$ otherwise the normal ideal $N(J)$ of $K^{\lambda,\rho}_I(G^+)$ generated by $J$ should be a proper subset of $J$ which is a contradiction. Hence, $K^{\lambda,\rho}_I(E)$ is subdirectly irreducible.

(iii) Since $K^{\lambda,\rho}_I(G)_{ea}= K^{\lambda,\rho}_I(G^+)$, comparing \cite[Thm 6.6]{DvuK} and Theorem \ref{th:6.6}, we see that (1) and (2) are equivalent.
\end{proof}

\begin{remark}\label{re:13}
{\rm If $E$ is trivial, the corresponding kite is a two-element Boolean algebra, therefore, Proposition \ref{pr:6.5}, Theorem \ref{th:6.7}, Lemmas \ref{le:6.8}--\ref{th:6.9}, Theorem \ref{th:4.12} are not necessarily valid.
}
\end{remark}

\section{Conclusion}

In the paper we have extended the reservoir of interesting examples of pseudo effect algebras which are closely connected with either the positive cones  po-groups as well as with generalized pseudo effect algebras and their unitization.  These pseudo algebras, called kite pseudo effect algebras, are ordinal sums of the product, $E^I$, of a generalized pseudo effect algebra $E$ with the product $(\overline E)^I$ ($\overline E$ is a copy of $E$ which is ordered in the reverse order as $E$), where operations depend on two bijections of an index set $I$.  These algebras can be both either commutative or non-commutative, and starting with a commutative generalized effect algebra, it can happen that the corresponding kite pseudo effect algebra can be non-commutative.  The resulting algebra depends on the bijections that are used, Theorem \ref{th:3.1}.

We have shown how kite pseudo effect algebras are connected with different types of the Riesz Decomposition Properties, Theorem \ref{th:3.5}. Since pseudo effect algebras are partial algebras, it is not straightforward how to study some notions of universal algebras. But for pseudo effect algebras with RDP$_1$ it is possible to study subdirect irreducibility  in an equivalent way to show when a kite pseudo effect algebra possesses the least non-trivial normal ideal, Section 4. It was shown that this property is closely connected with the existence of the least non-trivial normal ideal in the original GPEA and with special behavior of the bijections that are used. Some representation theorems, Theorem \ref{th:6.7} and Theorem \ref{th:6.9}, were presented. In addition, an open problem was formulated.


The paper enriched the realm of pseudo effect algebras and one has again shown the importance of po-groups and generalized pseudo effect algebras for the theory of quantum structures which can be useful also for modeling events of quantum measurements when commutativity is not a priori guaranteed.

\end{document}